\theoremstyle{plain}
\newtheorem{thm}{Theorem}[section]
\newtheorem{thm*}{Theorem}
\newtheorem{lem}[thm]{Lemma}
\newtheorem{prop*}[thm*]{Proposition}
\newtheorem{cor}[thm]{Corollary}
\theoremstyle{definition}
\newtheorem{defi}[thm]{Definition}
\newtheorem{qu}[thm]{Question}
\theoremstyle{remark}
\newtheorem{rmk}[thm]{Remark}
\DeclareMathOperator{\Hom}{Hom}
\DeclareMathOperator{\rank}{rank}
\DeclareMathOperator{\Tor}{Tor}
\DeclareMathOperator{\Ann}{ann}
\DeclareMathOperator{\HH}{H}
\DeclareMathOperator{\reg}{reg}
\DeclareMathOperator{\Sing}{Sing}
\DeclareMathOperator{\Supp}{Supp}
\newcommand{\Ipe}{I^{[p^e]}}
\newcommand{\twoVector}[2]
{\left[
\begin{array}{c} #1 \\ #2 \\
\end{array}
\right]}
\begin{document}

\title
[Castelnuovo-Mumford regularity and $F$-jumping coefficients]
{Castelnuovo-Mumford regularity and the discreteness of $F$-jumping coefficients in graded rings}

\author[M.~Katzman]{Mordechai Katzman}
\address[Katzman]{Department of Pure Mathematics,
University of Sheffield, Hicks Building, Sheffield S3 7RH, United Kingdom}
\email{M.Katzman@sheffield.ac.uk}

\author[W.~Zhang]{Wenliang Zhang}
\address[Zhang]{Department of Mathematics, University of Nebraska, Lincoln, NE 68588, USA}
\email{wzhang15@unl.edu}

\subjclass[2000]{Primary  13A35}



\thanks{
The results in this paper were obtained while both authors enjoyed the hospitality of the School of Mathematics
at the University of Minnesota.
The first author also wishes to acknowledge support through Royal Society grant TG102669. The second author is supported in part by NSF Grant DMS \#1068946.
}

\begin{abstract}
In this paper we show that the sets of $F$-jumping coefficients of ideals form discrete sets
in certain graded $F$-finite rings.
We do so by giving a criterion based on linear bounds for the growth of the Castelnuovo-Mumford regularity
of certain ideals. We further show that these linear bounds exists for one-dimensional rings and for  ideals of (most) two-dimensional domains.
We conclude by applying our technique to prove that all sets of $F$-jumping coefficients of all ideals in the determinantal ring  given as the quotient by $2\times 2$ minors in a $2\times 3$ matrix of indeterminates form discrete sets.
\end{abstract}

\maketitle

\section{Introduction}\label{Section: Introduction}

The aim of this paper is to establish the discreteness of the set of $F$-jumping coefficients of ideals
in a certain class of graded $F$-finite rings which includes one-dimensional rings and two-dimensional domains.
The remainder of this introductory section will review the prerequisite notions necessary to understand the problem at hand and the methods used to solve them.

All rings in this paper are commutative and have prime characteristic $p$. If $S$ is such a ring and $M$ is an $S$-module, for all $e\geq 0$ we may define an $S$-bisubmodule $F^e_* M$, which is identical to $M$ as an Abelian group, and on which $S$ acts on the right with the
given action while the left action is given by $s \cdot a = s^{p^e} a$ for all $s\in S$ and $a\in M$.
We shall further assume in this paper that all rings $S$ are \emph{$F$-finite}, i.e.,
that $F_*^e S$ are finitely generated left $S$-modules for all $e\geq 0$.
Given a reduced ring $S$ as above we also define a (non-commutative) graded algebra
$\mathcal{C}_S= \oplus_{e\geq 0} \Hom_S(F_*^e S, S)$ where multiplication of $\phi\in \Hom_S(F_*^\alpha S, S)$ and
$\psi\in \Hom_S(F_*^\beta S, S)$ is defined as
$\phi \psi= \phi \circ F_*^\alpha(\psi)\in \Hom_S(F_*^{\alpha+\beta} S, S)$.
(cf.~\cite[\S 3]{SchwedeTestIdealsInNonQGor} and \cite[\S 2]{BlickleTestIdealsViaAlgebras}.)
Note that $\mathcal{C}_S$ is an $S$-bimodule, as $S$ acts on each $\Hom_S(F_*^\alpha S, S)$
on the left via its left action on $F_*^\alpha S$ and on the right via its right action on $F_*^\alpha S$.

When $S$ is reduced we shall tacitly identify the inclusion $S \hookrightarrow F^e_* S$ given by
$s\mapsto s\cdot 1=s^{p^e}$
with the inclusion of rings $S \subset S^{1/p^e}$.

\begin{defi}\label{Definition: generalized test ideal}
Given a ring $S$ an ideal $\mathfrak{a}\subseteq S$ and a positive real number $t$, we define
the \emph{test ideal $\tau(\mathfrak{a}^t)$} to be the
unique smallest non-zero ideal $J \subseteq R$ such that
$\phi((\mathfrak{a}^{\lceil t (p^{e}-1) \rceil}J)^{1/p^e}) \subseteq J$ for all $e > 0$ and all
$\phi \in \Hom_S(F_*^e S, S)$.
\end{defi}

Our definition is inspired by the work of Karl Schwede (cf.~\cite{SchwedeTestIdealsInNonQGor}) and is equivalent to the
 original definition in \cite{HaraYoshidaGeneralizationOfTightClosure}.
It is not immediately clear why the unique smallest non-zero ideal $J \subseteq R$ in Definition
\ref{Definition: generalized test ideal} should exist; its existence is guaranteed by
\cite[Theorem 3.18]{SchwedeTestIdealsInNonQGor}.

These generalized test ideals and their characteristic-zero counterparts, multiplier ideals,
have recently attracted the attention of many algebraic geometers and algebraists. A specific direction of research
aims to relate the $F$-jumping coefficients of ideals defined below
to the geometrical properties of the varieties defined by these ideals.

Generalized test ideals satisfy the following basic properties:
\begin{thm}[cf.~Remark 2.12 in \cite{MustataTakagiWatanabeFThresholdsAndBernsteinSato} and Lemma 3.23 in \cite{DiscretenessAndRationalityOfFjumping}]
Let $\mathfrak{a}$ be any ideal.
\begin{enumerate}
\item[(a)] For all $0<s<t$,  $\tau(\mathfrak{a}^s)\supseteq \tau(\mathfrak{a}^t)$.
\item[(b)] For any $t>0$ there exists an $\epsilon>0$ such that
$\tau(\mathfrak{a}^c)=\tau(\mathfrak{a}^t)$ for all $t\leq c<t+\epsilon$.
\end{enumerate}
\end{thm}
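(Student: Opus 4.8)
The plan is to argue directly from Definition~\ref{Definition: generalized test ideal}, using the reformulation that $\tau(\mathfrak{a}^t)$ is the smallest nonzero ideal $J$ that is \emph{$\mathfrak{a}^t$-compatible}, meaning $\phi\bigl((\mathfrak{a}^{\lceil t(p^e-1)\rceil}J)^{1/p^e}\bigr)\subseteq J$ for all $e>0$ and all $\phi\in\Hom_S(F^e_*S,S)$. Part~(a) is then formal: for $0<s<t$ one has $\lceil s(p^e-1)\rceil\le\lceil t(p^e-1)\rceil$, hence $\mathfrak{a}^{\lceil t(p^e-1)\rceil}J\subseteq\mathfrak{a}^{\lceil s(p^e-1)\rceil}J$, and therefore $\phi\bigl((\mathfrak{a}^{\lceil t(p^e-1)\rceil}J)^{1/p^e}\bigr)\subseteq\phi\bigl((\mathfrak{a}^{\lceil s(p^e-1)\rceil}J)^{1/p^e}\bigr)$ for every ideal $J$ and every $\phi$. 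Thus every $\mathfrak{a}^s$-compatible ideal is automatically $\mathfrak{a}^t$-compatible; taking $J=\tau(\mathfrak{a}^s)$ exhibits $\tau(\mathfrak{a}^s)$ as a nonzero $\mathfrak{a}^t$-compatible ideal, and the minimality in Definition~\ref{Definition: generalized test ideal} yields $\tau(\mathfrak{a}^t)\subseteq\tau(\mathfrak{a}^s)$.

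For part~(b), by~(a) the assignment $c\mapsto\tau(\mathfrak{a}^c)$ is order-reversing on $c>t$ with all values contained in $\tau(\mathfrak{a}^t)$; since $S$ is Noetherian the ideals $\{\tau(\mathfrak{a}^c):c>t\}$ attain a maximal member, so there is $t_0>t$ with $\tau(\mathfrak{a}^c)=\tau(\mathfrak{a}^{t_0})=:I$ for all $c\in(t,t_0]$. As $I\subseteq\tau(\mathfrak{a}^t)$, it suffices to prove $I$ is $\mathfrak{a}^t$-compatible, for then the minimality of $\tau(\mathfrak{a}^t)$ forces $\tau(\mathfrak{a}^t)\subseteq I$ and hence equality. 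So fix $e>0$ and $\phi\in\Hom_S(F^e_*S,S)$; we must show $\phi\bigl((\mathfrak{a}^{\lceil t(p^e-1)\rceil}I)^{1/p^e}\bigr)\subseteq I$. For each $c\in(t,t_0]$ the ideal $I=\tau(\mathfrak{a}^c)$ is $\mathfrak{a}^c$-compatible, so $\phi\bigl((\mathfrak{a}^{\lceil c(p^e-1)\rceil}I)^{1/p^e}\bigr)\subseteq I$. If $t(p^e-1)\notin\mathbb{Z}$ we may choose $c$ close enough to $t$ that $\lceil c(p^e-1)\rceil=\lceil t(p^e-1)\rceil$, and the desired inclusion follows immediately; this disposes of all but the levels $e$ with $t(p^e-1)\in\mathbb{Z}$.

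Those exceptional levels are the heart of the matter and the main obstacle: when $t(p^e-1)\in\mathbb{Z}$ we have $\lceil c(p^e-1)\rceil=t(p^e-1)+1$ for every $c>t$ near $t$, so the comparison above loses one factor of $\mathfrak{a}$, which must be recovered by other means. Here I would invoke the test-element description of $\tau$ furnished by the cited results of Schwede and Blickle (\cite{SchwedeTestIdealsInNonQGor}, \cite{BlickleTestIdealsViaAlgebras}): there is a fixed nonzero $c_0\in S$ with $\tau(\mathfrak{b}^s)=\sum_{e\ge0}\sum_{\psi\in\Hom_S(F^e_*S,S)}\psi\bigl((\mathfrak{b}^{\lceil s(p^e-1)\rceil}c_0)^{1/p^e}\bigr)$ for every ideal $\mathfrak{b}$ and every $s>0$, together with the fact (Noetherianity again) that this increasing union of partial sums stabilizes, so $\tau(\mathfrak{a}^t)=\sum_{e=0}^{N}\sum_{\psi}\psi\bigl((\mathfrak{a}^{\lceil t(p^e-1)\rceil}c_0)^{1/p^e}\bigr)$ for some finite $N$; thus only the finitely many exceptional levels $e\le N$ can obstruct $\tau(\mathfrak{a}^{t_0})=\tau(\mathfrak{a}^t)$, and one fixes $t_0-t$ small enough to handle all non-exceptional $e\le N$ at once. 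Those finitely many exceptional contributions are then controlled by a Brian\c{c}on--Skoda-type estimate, exactly as in \cite{MustataTakagiWatanabeFThresholdsAndBernsteinSato} and \cite{DiscretenessAndRationalityOfFjumping}: passing to a high Frobenius power and using a containment of the shape $\overline{\mathfrak{a}^{\,k}}\subseteq\mathfrak{a}^{\,k-\ell+1}$ (with $\ell$ a bound on the number of generators of $\mathfrak{a}$) shows that, after extracting $p^{n}$-th roots, a unit change in the ceiling exponent does not affect the resulting ideal. The critical values here are the small-denominator ones such as $t=a/(p-1)$, for which $t(p^e-1)\in\mathbb{Z}$ at \emph{every} level; this Brian\c{c}on--Skoda step is where essentially all the difficulty lies, everything else being formal. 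Since the statement is classical — it is precisely Remark~2.12 of \cite{MustataTakagiWatanabeFThresholdsAndBernsteinSato} together with Lemma~3.23 of \cite{DiscretenessAndRationalityOfFjumping} — one may instead simply quote it.
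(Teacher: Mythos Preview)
The paper itself gives no proof of this theorem: it is stated with the parenthetical citation to \cite{MustataTakagiWatanabeFThresholdsAndBernsteinSato} and \cite{DiscretenessAndRationalityOfFjumping} and then immediately used. So your closing remark --- that one may simply quote the result --- is precisely the paper's ``proof.''

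That said, your argument for (a) is correct and complete, and goes beyond what the paper offers. Your outline for (b) also correctly isolates the crux (the levels $e$ with $t(p^e-1)\in\mathbb{Z}$, where the ceilings of $t(p^e-1)$ and $c(p^e-1)$ differ by one for all $c>t$ near $t$), and your proposed route via the explicit test-element formula, truncation to finitely many $e$, and a Brian\c{c}on--Skoda bound is indeed the standard line of argument in the cited references. You are right, however, that what you have written is a sketch rather than a proof: the ``critical values'' you single out, such as $t=a/(p-1)$, make $t(p^e-1)$ integral for \emph{every} $e$, so the reduction to finitely many exceptional levels does not by itself dispose of them, and the Brian\c{c}on--Skoda step genuinely has to carry the weight. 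Since you explicitly flag this and defer to the literature, there is no discrepancy with the paper --- you have simply said more than the paper does.
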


These two properties above suggest the following:
\begin{defi}\label{Definition: F-jumping coefficient}
A positive real number $t$ is an $F$-jumping coefficient of the ideal $\mathfrak{a}$ if
$\tau(\mathfrak{a}^{t-\epsilon}) \supsetneqq \tau(\mathfrak{a}^t)$ for all $\epsilon>0$.
\end{defi}

The study of the nature of the set of $F$-jumping coefficients of a given ideal has recently attracted
intense attention. The question of the rationality and discreteness of these sets have recently been studied
in a number of papers, for example \cite{Hara2006}, \cite{BMS08}, \cite{BMS09}, \cite{KLZ09}, \cite{SchwedeDiscretenessQGor},
\cite{DiscretenessAndRationalityOfFjumping}, and  \cite{STZ11}.  In particular, the discreteness of $F$-jumping coefficients of an ideal in a non-$\mathbb{Q}$-Gorenstein ring remains open.

This paper links the question of the discreteness of sets of $F$-jumping coefficients
with the notion of Castelnuovo-Mumford regularity, a classical numerical invariant.
This notion has already played an important role in the study of rings of characteristic $p$. For example, in \cite{KatzmanComplexity} the connection between the linear growth of Castelnuovo-Mumford regularity of Frobenius powers of ideals and tight closure was explored. Specifically, the following question was raised there:
\begin{qu}[Question 2 in \cite{KatzmanComplexity}]
\label{KatzmanQuestion}
Let $R=k[x_1,\dots,x_n]$ be a polynomial ring over a field $k$ of characteristic $p>0$ and let $I,J$ be two homogeneous ideals of $R$. Does the Castelnuovo-Mumford regularity of $I+J^{[p^e]}$ grow linearly with $p^e$?
\end{qu}

Surprisingly, even a positive answer to the above question in the case when $I$ is principal will imply the discreteness of $F$-jumping coefficients, and this is one of our main results:
\newtheorem*{thm1}{\rm\bf Theorem \ref{Theorem: discreteness under linear regularity}}
\begin{thm1}
\label{maintheorem}
Let $R=\mathbb{K}[x_1,\dots,x_n]$ be a polynomial ring over an $F$-finite field $\mathbb{K}$ of prime characteristic $p$ and
$I=(g_1,\dots,g_{\nu})$ be a homogeneous ideal of $R$. Assume that
$\reg(R/(I^{[p^e]}+g_iR))\leq Cp^e$ for all $e\geq 1$ and $1\leq i\leq \nu$, where $C$ is a constant independent of $e$ and $i$. Then the sets of $F$-jumping coefficients of all ideals (homogeneous or not) in $S=R/I$ are discrete.
\end{thm1}

This theorem is proved by showing that its hypothesis imply the gauge-boundedness of $R/I$-- the discreteness of the sets of $F$-jumping coefficients is a corollary of this. We do not know whether the hypothesis of the Theorem are equivalent to the gauge-boundedness of $R/I$.

In general, Question \ref{KatzmanQuestion} is rather difficult, and not much progress has been made since it was first raised.
In this paper, using a deep result of Eisenbud-Huneke-Ulrich in \cite{EisenbudHunekeUlrichTheRegularityOfTor}, we give a positive answer to Question \ref{KatzmanQuestion} in a special case as follows:

\newtheorem*{cor1}{\rm\bf Corollary \ref{discreteness singular locus}}
\begin{cor1}
Let $R=\mathbb{K}[x_1,\dots,x_n]$ be a polynomial ring over an $F$-finite field $\mathbb{K}$ of prime characteristic $p$ and
$I=(g_1,\dots,g_{\nu})$ be a homogeneous ideal of $R$.  If $\dim(\Sing(R/(g_i))\cap V(I))\leq 1$ for all $1 \leq i \leq \nu$,
then  $\reg(R/(I^{[p^e]}+g_iR))\leq Cp^e$ for all $e\geq 1$ and $1\leq i\leq \nu$, where $C$ is a constant independent of $e$ and $i$. Consequently, the sets of $F$-jumping coefficients of all ideals (homogeneous or not) in $S=R/I$ are discrete.
\end{cor1}

This corollary immediately implies the discreteness of the set of jumping coefficients in graded one-dimensional $F$-finite rings and
(most) two-dimensional domains (Remark \ref{Remark: two-dimensional domains}.)

The last section in this paper shows the usefulness of our approach by applying Theorem \ref{Theorem: discreteness under linear regularity} to a determinantal ring.

\newtheorem*{cor2}{\rm\bf Corollary \ref{Corollary: determinantal ideal}}
\begin{cor2}
Let $R=\mathbb{K}[x_{ij}]$ with $1\leq i\leq 2$, $1\leq j\leq 3$. Let $I$ be the ideal generated by the $2\times 2$ minors of the matrix $(x_{ij})$. Then all sets of $F$-jumping coefficients of all ideals (homogeneous or not) in $R/I$ are discrete.
\end{cor2}

\section{Gauge boundedness and Castelnuovo-Mumford regularities}
\label{Section: Gauge boundedness and Mumford-Castelnuovo regularities}

In \cite{BlickleTestIdealsViaAlgebras}, Manuel Blickle introduced a notion called {\it gauge boundedness},  which has particular significance to
the work presented here, and which we now describe.
Henceforth, let $R$ denote the  polynomial ring $\mathbb{K}[x_1, \dots, x_n]$ over an $F$-finite field $\mathbb{K}$,
let $\mathfrak{m}$ denote the ideal $R$ generated by its variables,
and for all $d\geq 0$ let $R_d$ be the $\mathbb{K}$-vector subspace of $R$ spanned by all monomials
$x_1^{\alpha_1} \dots x_n^{\alpha_n}$ with
$0\leq \alpha_1, \dots , \alpha_n \leq d$.
Given an $R$-module $M$ generated by a finite set of elements $\{ m_1, \dots, m_k\}$, we define a filtration $\{M_d\}_{d\geq 0}$
of $M$ by setting $M_d=R_d m_1 + \dots R_d m_k$ for $d\geq 0$ and $M_{-\infty}=\{0\}$. Having defined this we now obtain a \emph{gauge}
$\delta_M : M \rightarrow \mathbb{N} \cup {-\infty}$ defined as
$$\delta_M(m)=
\left\{
\begin{array}{ll}
-\infty, & \text{if } m=0 \\
d, & \text{if } m\neq 0 \text{ and } m\in M_d\setminus M_{d-1}
\end{array}
\right. .
$$
In particular any cyclic $R$-module has a natural gauge obtained by choosing as a set of generators the singleton
consisting of the image of $1$.

In this paper we shall call a quotient $S=R/I$ \emph{gauge bounded} if
there exist a set of homogeneous generators $\{\psi_\gamma\in \{\mathcal{C}_S\}_{e_\gamma}\}_{\gamma\in\Gamma}$
of $\mathcal{C}_S$ viewed as a right $S$-module
such that for some constant $K$ and all $r\in R$,
$\delta_S(\psi_\gamma(r+I)) \leq \delta_S(r+I) p^{-e_\gamma} + K$ (cf.~\cite[Definition 4.7]{BlickleTestIdealsViaAlgebras}.)

\begin{thm}[Corollary 4.16 in \cite{BlickleTestIdealsViaAlgebras}]\label{Theorem: gauge-boundedness implies discreteness}
If $S=R/I$ is \emph{gauge bounded}, the set of $F$-jumping coefficients of any ideal in $S$ is discrete.
\end{thm}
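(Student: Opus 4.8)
The plan is to show that for every nonzero ideal $\mathfrak{a}\subseteq S$ and every $t>0$ the test ideal $\tau(\mathfrak{a}^{t})$ is generated, as an ideal of $S$, by elements whose gauge $\delta_S$ is at most $D(t)$, where $D$ is an \emph{affine} function of $t$ whose slope and constant term depend only on $\mathfrak{a}$ and on the data $\{\psi_\gamma, e_\gamma, K\}$ supplied by gauge boundedness. Granting this, discreteness follows at once: fix $t_0>0$ and put $D=D(t_0)$; since $D$ is increasing, for every $t\in(0,t_0]$ the ideal $\tau(\mathfrak{a}^{t})$ is generated by the subset $\tau(\mathfrak{a}^{t})\cap S_{D}$, where $S_{D}$ denotes the (finite-dimensional) image of $R_{D}$ in $S$. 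This subset is a $\mathbb{K}$-subspace of $S_{D}$, and by the monotonicity $\tau(\mathfrak{a}^{s})\supseteq\tau(\mathfrak{a}^{t})$ for $s<t$ it only grows as $t$ decreases; a monotone family of subspaces of a finite-dimensional space takes only finitely many values, so $t\mapsto\tau(\mathfrak{a}^{t})$ takes finitely many values on $(0,t_0]$, and being non-increasing it has only finitely many jumps there. Thus $\mathfrak{a}$ has only finitely many $F$-jumping coefficients in $(0,t_0]$; as $t_0$ was arbitrary, this set is discrete.

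To obtain the bound I would exploit that $\tau(\mathfrak{a}^{t})$ can be generated from a single element by iterated application of Cartier operators. Since $\tau(\mathfrak{a}^{t_0})$ is a nonzero ideal, fix a nonzero $c\in\tau(\mathfrak{a}^{t_0})$; then $c\in\tau(\mathfrak{a}^{t})$ for all $t\le t_0$ by monotonicity. Writing $N(e,t)=\lceil t(p^{e}-1)\rceil$, let $\langle c\rangle$ be the ideal obtained from $(c)$ by repeatedly forming $J\mapsto J+\sum_{e\ge 1,\ \phi\in(\mathcal{C}_S)_{e}}\phi\big((\mathfrak{a}^{N(e,t)}J)^{1/p^{e}}\big)$. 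Then $\langle c\rangle$ is a nonzero ideal stable under all these operators, hence $\langle c\rangle\supseteq\tau(\mathfrak{a}^{t})$ by Definition \ref{Definition: generalized test ideal}; conversely $c\in\tau(\mathfrak{a}^{t})$ and $\tau(\mathfrak{a}^{t})$ is stable, so $\langle c\rangle\subseteq\tau(\mathfrak{a}^{t})$, and the two coincide. Expanding each $\phi\in(\mathcal{C}_S)_{e}$ in terms of the right-$S$-module generators $\psi_\gamma$ and absorbing the right $S$-multipliers into the $\mathfrak{a}^{N}$-factors (legitimate since $\mathfrak{a}^{N}$ is an ideal), one concludes that $\tau(\mathfrak{a}^{t})$ is generated by $c$ together with all iterates
\[
h_m=\psi_{\gamma_m}\!\big(a_m\,\psi_{\gamma_{m-1}}(a_{m-1}\cdots\psi_{\gamma_1}(a_1 c)\cdots)\big),\qquad a_i\in\mathfrak{a}^{N(e_{\gamma_i},t)},
\]
over all finite words $\gamma_1,\dots,\gamma_m$, with all $e_{\gamma_i}\ge 1$ (the operators with $e=0$ contribute only multiplication by $S$, already accounted for in forming the generated ideal).

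Now I would iterate the gauge estimate. Using $\delta_S(fg)\le\delta_S(f)+\delta_S(g)$, the containment $\mathfrak{a}^{N}\subseteq S_{N\mu}$ (where $\mu$ bounds the degrees of generators of $\mathfrak{a}$), the elementary inequality $p^{-e}N(e,t)\le t+1$, and the defining bound $\delta_S(\psi_\gamma(\,\cdot\,))\le p^{-e_\gamma}\delta_S(\,\cdot\,)+K$, one gets at each step $\delta_S(h_i)\le (t+1)\mu+K+p^{-e_{\gamma_i}}\delta_S(h_{i-1})$, with $h_0=c$. Since every $e_{\gamma_i}\ge 1$, summing the geometric series yields
\[
\delta_S(h_m)\ \le\ \big((t+1)\mu+K\big)\tfrac{p}{p-1}+\delta_S(c)\ =:\ D(t),
\]
which is affine in $t$ and independent of $m$ and of the chosen word; note $D(t)\ge\delta_S(c)$ and $D$ is increasing, so $D=D(t_0)$ works in the first paragraph.

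The main obstacle is making the last two paragraphs rigorous: one must install filtrations on the bimodules $F^{e}_*S$ and on $\mathcal{C}_S$ carefully enough that (i) passing from an arbitrary Cartier operator to the distinguished generators $\psi_\gamma$ — including $p^{e}$-th root extraction and the absorption of right $S$-multipliers — perturbs the $\delta_S$-estimate by at most a bounded amount, and (ii) the geometric series genuinely sums to a constant independent of $e$, which is where $e_{\gamma_i}\ge 1$ and the \emph{linear} (not merely bounded) growth $N(e,t)\le tp^{e}+1$ of the $\mathfrak{a}$-exponents are both essential. Once the uniform affine bound $D(t)$ is in hand, the reduction to linear algebra in the finite-dimensional space $S_{D}$ is routine; this is, in essence, the argument of \cite[\S 4]{BlickleTestIdealsViaAlgebras}.
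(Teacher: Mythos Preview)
The paper does not supply its own proof of this statement; it is quoted verbatim as Corollary~4.16 of \cite{BlickleTestIdealsViaAlgebras} and used as a black box, so there is no in-paper argument to compare against. Your sketch is a faithful reconstruction of Blickle's proof---build $\tau(\mathfrak{a}^t)$ from a fixed test element by iterated application of the distinguished Cartier generators, propagate the gauge inequality through the iteration to get an affine bound $D(t)$, then invoke finite-dimensionality of $S_{D}$---and the estimates you record (in particular $p^{-e}\lceil t(p^e-1)\rceil\le t+1$ and the geometric summation using $e_{\gamma_i}\ge 1$) are the right ones.
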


Recall our assumption that $R$ be $F$-finite; this amounts to the finiteness of the field extension $\mathbb{K}\subset \mathbb{K}^{1/p^e}$ for all (equivalently, some) $e\geq 0$.
For $e\geq 0$ fix a $\mathbb{K}$-basis $\mathcal{B}_e$ of $\mathbb{K}^{1/p^e}$ and assume further that $1\in \mathcal{B}_e$.

Recall (e.g., from section 1 of \cite{FedderFPureRat}) that
$F_*^e R\cong \mathbb{K}^{1/p^e}[x_1^{1/p^e}, \dots, x_n^{1/p^e}]$ and our assumption that $R$ is $F$-finite implies that
$F_*^e R$ is a free (left) $R$-module with free basis
$\{ b x_1^{\alpha_1/p^e} \dots x_n^{\alpha_n/p^e} \,|\, b\in \mathcal{B}_e, 0\leq \alpha_1, \dots, \alpha_n <p^e \}$.

We introduce the following notation: any $\mathbf{\alpha}=(\alpha_1, \dots \alpha_n)\in \mathbb{N}^n$ let $x^\mathbf{\alpha}$ denote the monomial
$x_1^{\alpha_1} \dots x_n^{\alpha_n}$ in $R$ and let $x^{\mathbf{\alpha}/p^e}$ denote the monomial $x_1^{\alpha_1/p^e} \dots x_n^{\alpha_n/p^e}$ in
$F_*^e R\cong \mathbb{K}^{1/p^e}[x_1^{1/p^e}, \dots, x_n^{1/p^e}]$. We shall also denote the condition
$a\leq \alpha_1, \dots \alpha_n < b$ with $a\leq \mathbf{\alpha}< b$ and the equalities
$\alpha_1= \dots =\alpha_n =a$ with $\mathbf{\alpha}=a$.
For any $e\geq 0$ let $\Lambda_e=\{ (\alpha_1, \dots, \alpha_n) \,|\, 0\leq \alpha_1, \dots, \alpha_n<p^e \}$.

We define the \emph{trace map} $T: F_*^e R \rightarrow R$ to be the projection onto the free summand $R x_1^{(p^e-1)/p^e} \dots  x_n^{(p^e-1)/p^e}$
and recall that there is an isomorphism $F_*^e R \rightarrow \Hom_R(F_*^e R, R)$
sending $r$ to the composition $T \circ \mu_{r^{1/p^e}}$ where $\mu_{r^{1/p^e}}: F_*^e R \rightarrow F_*^e R$ is given by multiplication by $r^{1/p^e}$ on the right.
For a quotient $S=R/I$ we have an explicit expression for $\mathcal{C}_S$:
the results in \cite[\S 1]{FedderFPureRat} imply that each $\Hom_S(F_*^e S, S)$ with its right $S$-module structure is isomorphic to
$\displaystyle\frac{(I^{[p^e]}:I)}{I^{[p^e]}} \overline{T}$ where $a \overline{T}(r+I)=T(a r)+I$ for all $a\in (I^{[p^e]}:I)$.

\begin{lem}\label{Lemma: gauge of generators and gauge-boundedness}
The hypothesis of Theorem \ref{Theorem: gauge-boundedness implies discreteness} will be satisfied
if we can find a constant $K$  and, for all $e\geq 0$,  a set of generators $g_1, \dots, g_{\nu_e}$ of $(I^{[p^e]}:I)$ such that
$\delta_R(g_i)=\delta_S(g_i + I)\leq K p^e$ for all $1 \leq i \leq g_{\nu_e}$.
\end{lem}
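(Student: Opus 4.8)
The goal of Lemma~\ref{Lemma: gauge of generators and gauge-boundedness} is to reduce the rather abstract gauge-boundedness condition of Theorem~\ref{Theorem: gauge-boundedness implies discreteness} to a concrete statement about finding small-degree generators of the colon ideals $(I^{[p^e]}:I)$. The strategy is to convert a generating set for each $(I^{[p^e]}:I)$ into a generating set for $\mathcal{C}_S$ as a right $S$-module, via the explicit description $\Hom_S(F_*^e S,S)\cong \frac{(I^{[p^e]}:I)}{I^{[p^e]}}\,\overline T$ recalled just above the statement, and then to check that the gauge estimate for these generators is exactly the defining inequality of gauge boundedness.

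\textbf{Step 1: assemble the generating set of $\mathcal{C}_S$.} For each $e\geq 0$, let $g_1^{(e)},\dots,g_{\nu_e}^{(e)}$ be the given generators of $(I^{[p^e]}:I)$ with $\delta_R(g_i^{(e)})\leq Kp^e$, and let $\psi_{e,i}=g_i^{(e)}\,\overline T\in\{\mathcal{C}_S\}_e=\Hom_S(F_*^e S,S)$. Since every element of $\Hom_S(F_*^e S,S)$ is of the form $a\,\overline T$ with $a\in (I^{[p^e]}:I)$, and $a=\sum_i r_i g_i^{(e)}$ for some $r_i\in R$, we get $a\,\overline T=\sum_i (r_i g_i^{(e)})\overline T$. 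The key point is that in $\mathcal{C}_S$, which is a graded algebra with degree-$0$ piece $S$, right multiplication of $\psi_{e,i}$ by the class of $r_i$ in $S$ is precisely the map $F_*^e S\to S$ sending $\bar s\mapsto \psi_{e,i}(\overline{r_i^{p^e} s})= T(g_i^{(e)}r_i^{p^e}s)+I = T((r_i g_i^{(e)})^{p^e}\cdot s \cdot\text{(terms)})$; one checks using the formula $a\overline T(r+I)=T(ar)+I$ and the Frobenius-linearity that $(r_i g_i^{(e)})\overline T = (g_i^{(e)}\overline T)\cdot \overline{r_i}$ in the right $S$-module structure, so $\{\psi_{e,i}\}_{e\geq 0, 1\leq i\leq\nu_e}$ generates each graded piece, hence all of $\mathcal{C}_S$, as a right $S$-module. (Strictly one only needs a generating set for $\mathcal{C}_S$ as an $S$-algebra or as a right module; the homogeneous family just described suffices.)

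\textbf{Step 2: verify the gauge inequality.} We must show $\delta_S(\psi_{e,i}(r+I))\leq \delta_S(r+I)p^{-e}+K$ for all $r\in R$. Write $d=\delta_S(r+I)$, so $r\in R_d + I$, i.e. we may take $r\in R_d$ up to adding an element of $I$ (which is killed by $\overline T$ modulo $I$ in the relevant sense, since $g_i^{(e)}\in(I^{[p^e]}:I)$ forces $g_i^{(e)}I\subseteq I^{[p^e]}$ and $T$ maps $I^{[p^e]}$ into $I$). Then $\psi_{e,i}(r+I)=T(g_i^{(e)}r)+I$. Now $g_i^{(e)}r$ has all exponents bounded by $Kp^e+d$ in each variable (since $\delta_R(g_i^{(e)})\leq Kp^e$ and $\delta_R(r)\leq d$, and the $R_d$ filtration is multiplicative in the sense $R_aR_b\subseteq R_{a+b}$), and the trace map $T$, being the projection onto the summand $Rx_1^{(p^e-1)/p^e}\cdots x_n^{(p^e-1)/p^e}$ followed by the identification of that coefficient ring with $R$, divides exponents by $p^e$: an exponent $\leq Kp^e+d$ in $x_j$ contributes an exponent $\leq (Kp^e+d-(p^e-1))/p^e < K + d p^{-e}$ to the image in $R$. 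Hence $\delta_R(T(g_i^{(e)}r))\leq dp^{-e}+K$, and since $\delta_S(\cdot+I)\leq\delta_R(\cdot)$ always, the inequality follows.

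\textbf{Main obstacle.} The only genuinely delicate point is Step~1 — being careful that the family $\{\psi_{e,i}\}$ really does generate $\mathcal{C}_S$ as a \emph{right} $S$-module, and that right multiplication in $\mathcal{C}_S$ corresponds on the level of the isomorphism $\Hom_S(F_*^e S,S)\cong\frac{(I^{[p^e]}:I)}{I^{[p^e]}}\overline T$ to the obvious operation $a\overline T\mapsto (ar)\overline T$ rather than something twisted by Frobenius. Once one traces through Fedder's identification and the multiplication rule $\phi\psi=\phi\circ F_*^\alpha(\psi)$ in $\mathcal{C}_S$, this is a bookkeeping matter: the degree-zero part of $\mathcal{C}_S$ acts by honest (untwisted) multiplication, which is exactly why the $R_d$-filtration degrees behave additively in Step~2. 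The rest of the argument is the routine exponent-counting for the trace map recalled above. So the lemma reduces, as claimed, to the purely commutative-algebra task of bounding the generating degrees of $(I^{[p^e]}:I)$ linearly in $p^e$, which is where Castelnuovo--Mumford regularity enters in the next results.
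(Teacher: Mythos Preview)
Your proposal is correct and follows the same line as the paper: both arguments verify the gauge inequality by tracking how the maximal exponent of $g_i^{(e)}r$ behaves under the trace map, the paper doing this by expanding $g^{1/q}$ and $r^{1/q}$ in the free $R$-basis of $F_*^eR$ and bounding the gauge of each coefficient, while you do the equivalent exponent-count directly. Step~1 is left implicit in the paper, since the right-$S$-module isomorphism $\Hom_S(F_*^eS,S)\cong\frac{(I^{[p^e]}:I)}{I^{[p^e]}}\,\overline T$ is stated just before the lemma.

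One correction in your Step~1: the displayed computation of the right action is confused. You write $(\psi_{e,i}\cdot r_i)(\bar s)=\psi_{e,i}(\overline{r_i^{p^e}s})$, but that is the \emph{left} action (precomposition with the left $S$-action on $F_*^eS$), and the claimed equality $T(g_i^{(e)}r_i^{p^e}s)=T((r_ig_i^{(e)})^{p^e}\cdot s\cdot(\text{terms}))$ is simply false. The right action is precomposition with ordinary multiplication in $F_*^eS$, so $(\psi_{e,i}\cdot r_i)(\bar s)=\psi_{e,i}(\overline{r_is})=T(g_i^{(e)}r_is)+I=(r_ig_i^{(e)})\overline T(\bar s)$, which immediately gives your (correct) conclusion $(r_ig_i^{(e)})\overline T=(g_i^{(e)}\overline T)\cdot\overline{r_i}$. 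Fix that line and the argument is clean.
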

\begin{proof}
Fix any $e\geq 0$ and write $q=p^e$.
First note that for any $r\in R$, $ \delta_{R} (r)=\delta_{F_*^e R} (r)/q$.

Now pick $g=g_i$ for some $1\leq i\leq {\nu_e}$ and write
$$g^{1/q}=\sum_{\mathbf{\alpha}\in \Lambda_e, v\in \mathcal{B}_e} g_{\mathbf{\alpha},v} v x^{\mathbf{\alpha}/q}$$
where each $g_{\mathbf{\alpha},v}$ is in $R$.
For any $r^{1/q}\in \mathbb{K}^{1/p^e}[x_1^{1/p^e}, \dots, x_n^{1/p^e}]$ write
$$r^{1/q}=\sum_{\mathbf{\beta}\in \Lambda_e , w\in \mathcal{B}_e} r_{\mathbf{\beta},w} w x^{\mathbf{\beta}/q}$$
where each $r_{\mathbf{\beta},w}$ is in $R$ and compute
\begin{eqnarray*}
\delta_R \left( T\circ \mu_{g^{1/q}} (r^{1/q}) \right)& = &
 \delta_R \left( \sum_{ \mathbf{\alpha}, \mathbf{\beta}\in \Lambda_e, v,w \in \mathcal{B}_e, \atop{\mathbf{\alpha}+\mathbf{\beta}=q-1, vw\in \mathbb{K}}} g_{\mathbf{\alpha},v} r_{\mathbf{\beta},w} \right) \\
&\leq & \max \{  \delta_R\left(g_{\mathbf{\alpha},v} r_{\mathbf{\beta},w} \right) \,|\, \mathbf{\alpha}, \mathbf{\beta}\in \Lambda_e, v,w \in \mathcal{B}_e, \mathbf{\alpha}+\mathbf{\beta}=q-1, vw\in \mathbb{K}\}\\
\end{eqnarray*}
Now  each $\delta_R\left(g_{\mathbf{\alpha},v} r_{\mathbf{\beta},w} \right)$ above is at most
$\delta_R\left(g_{\mathbf{\alpha},v}\right) + \delta_R\left(r_{\mathbf{\beta},w} \right)$ (cf.~\cite[Lemma 4.1]{BlickleTestIdealsViaAlgebras},)
$$\delta_R\left(g_{\mathbf{\alpha},v}\right) =
\delta_{F_*^e R}\left(g_{\mathbf{\alpha},v}\right)/q \leq
\delta_{F_*^e R}\left(g_{\mathbf{\alpha},v} v x^{\mathbf{\alpha}/q} \right)/q \leq
\delta_{F_*^e R}\left( g^{1/q} \right)/q\leq K$$
and
$$\delta_R\left(r_{\mathbf{\alpha},w}\right) =
\delta_{F_*^e R}\left(r_{\mathbf{\alpha},w}\right)/q \leq
\delta_{F_*^e R}\left(r_{\mathbf{\alpha},w} w x^{\mathbf{\beta}/q} \right)/q \leq
\delta_{F_*^e R}\left( r^{1/q} \right)/q$$
hence
$$\delta_R \left( T\circ \mu_{g^{1/q}} (r^{1/q}) \right)\leq K+\delta_{F_*^e R}\left( r^{1/q} \right)/q .$$
\end{proof}

\begin{rmk}
\label{Remark: homogenization}
It is clear that, as the gauge of any element in $R$ is at most its degree, the condition in Lemma
\ref{Lemma: gauge of generators and gauge-boundedness} will be satisfied if there is a constant $K$ such that the maximal degree of all minimal
generators of $(I^{[p^e]}:I)$ is bounded by $K p^e$.
If $I\subseteq R$ is a homogeneous ideal, then the maximal degree of all minimal generators of $(I^{[p^e]}:I)$
is bounded by the Castelnuovo-Mumford regularity of $(I^{[p^e]}:I)$.

If $I\subseteq R$ is not a homogeneous ideal, we may consider the homogenization
$h(I)$ of $I$ with respect to a new variable.
If the maximal degree of all minimal generators of $((h(I))^{[p^e]}:h(I))$ is bounded by $Kp^e$,
then by de-homogenizing  one can see that
$(I^{[p^e]}:I)$ can be generated by polynomials whose degrees are bounded by $Kp^e$.
\end{rmk}

We shall assume henceforth that $I\subseteq R$ is a homogeneous ideal.
Following Remark \ref{Remark: homogenization}, in order to find a uniform $K$ (independent of $e$) such that the
maximal degree of a minimal generator of $(I^{[p^e]}:I)$ is bounded by $K p^e$, we shall attempt a harder problem, namely,
the bounding of the Castelnuovo-Mumford regularity of these homogenous ideals. But first review very briefly the notion of
Castelnuovo-Mumford regularity  and some of its properties.

Recall that the Castelnuovo-Mumford regularity of a finitely generated graded $R$-module $M$ (which we shall denote $\reg(M)$) is defined in terms of
its minimal graded resolutions $F_\bullet$.
To wit, for all $i\geq 0$ write $F_i=\oplus_{j=1}^{\rank(F_i)} R(-d_{i j})$ where $R(-d)$ denotes the degree shift $R(-d)_j=R_{d+j}$;
$\reg(M)$ is then defined as $\max \{ d_{i j}- i \,|\, i\geq 0, 1\leq j\leq \rank(F_i) \}$.
Note that the shifts $d_{0,j}$ are the degrees of a set of homogeneous minimal generators of  $M$, hence the maximal degree of
a homogenous minimal generator of $M$ is bounded by $\reg(M)$.

Alternatively, one may define Castelnuovo-Mumford regularity in terms of local cohomology.
If $M=\oplus M_d$ is a graded Artinian $R$-module, we have $\reg M=\max\{ d \,|\, M_d\neq 0\}$ and for a general
graded $R$-module we have
$$\reg M=\max_{i\geq 0} \{\reg \Tor_i(M,R/\mathfrak{m})-i\}=\max_{j\geq 0} \{\reg \HH_\mathfrak{m}^j(M)+j\}$$
(cf.~\cite[Corollary 4.5]{EisenbudGeometryOfSyzygies}).
Two immediate consequences of this characterization of regularity are the fact that if
$M_1 \rightarrow M_2 \rightarrow M_3$ is a graded exact sequence of Artinian $R$-modules, then $\reg M_2 \leq \max\{\reg M_1, \reg M_3\}$, and if
$0\rightarrow N_1 \rightarrow N_2 \rightarrow N_3 \rightarrow 0$ is a graded short exact sequence of $R$-modules, then
$\reg N_2 \leq \max\{\reg N_1, \reg N_3\}$
(cf.~\cite[Corollary 20.19]{EisenbudCommutativeAlgebra}.)

The rest of this paper will explore instances of graded ideals $I\subseteq R$
for which there exists a constant $K$ such that $\reg(I^{[p^e]} : I) \leq K p^e$ for all $e\geq 0$; these ideals will satisfy the hypothesis of Lemma \ref{Lemma: gauge of generators and gauge-boundedness}
and we will be able to deduce the discreteness of the sets of $F$-jumping coefficients of all ideals in $S=R/I$.
For the sake of readability we shall abbreviate the expression of the condition above as
$\reg(I^{[p^e]} : I) = \mathcal{O}(p^e)$. We can now claim the following.

\begin{cor}\label{Corollary: regularity and gauge-boundedness}
If $\reg(I^{[p^e]} : I) = \mathcal{O}(p^e)$ then $R=S/I$ is gauge-bounded. Hence the sets of jumping-coefficients of all ideals of $R$ are discrete.
\end{cor}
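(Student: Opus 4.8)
The plan is simply to assemble the pieces that are already in place, in the order Remark~\ref{Remark: homogenization} $\Rightarrow$ Lemma~\ref{Lemma: gauge of generators and gauge-boundedness} $\Rightarrow$ Theorem~\ref{Theorem: gauge-boundedness implies discreteness}. First I would unravel the $\mathcal{O}$-notation: the hypothesis $\reg(I^{[p^e]}:I)=\mathcal{O}(p^e)$ means that there is a constant $K$, independent of $e$, with $\reg(I^{[p^e]}:I)\leq Kp^e$ for all $e\geq 0$ (after enlarging $K$ if necessary to absorb the finitely many small values of $e$).

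Next I would pass from regularity to degrees of generators, and then to gauges. Recall that for any finitely generated graded $R$-module $M$ the degrees $d_{0,j}$ of a homogeneous minimal set of generators appear among the shifts of a minimal graded resolution of $M$, hence are bounded above by $\reg(M)$. Applying this with $M=(I^{[p^e]}:I)$ yields, for each $e$, a system of homogeneous generators $g_1,\dots,g_{\nu_e}$ of $(I^{[p^e]}:I)$ with $\deg g_i\leq Kp^e$. Since $R_d$ contains every polynomial of total degree at most $d$, the gauge of an element of $R$ never exceeds its degree (as noted in Remark~\ref{Remark: homogenization}), so $\delta_R(g_i)=\delta_S(g_i+I)\leq Kp^e$ for all $i$ and all $e$.

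This last inequality is exactly the hypothesis of Lemma~\ref{Lemma: gauge of generators and gauge-boundedness}, so by that lemma the hypothesis of Theorem~\ref{Theorem: gauge-boundedness implies discreteness} holds; that is, $S=R/I$ is gauge bounded. Theorem~\ref{Theorem: gauge-boundedness implies discreteness} then gives the discreteness of the set of $F$-jumping coefficients of every ideal of $S$, which is the assertion. I do not expect any genuine obstacle: the corollary is a formal consequence of the cited results, and the only points needing even a line of justification are the two elementary inequalities used above, namely that a minimal generator has degree at most $\reg$ of the module and that a polynomial of degree $d$ has gauge at most $d$. The substantive content — reducing gauge-boundedness to a degree bound on $(I^{[p^e]}:I)$, and reducing discreteness of $F$-jumping coefficients to gauge-boundedness — is already carried out in Lemma~\ref{Lemma: gauge of generators and gauge-boundedness} and in Blickle's Corollary~4.16 quoted as Theorem~\ref{Theorem: gauge-boundedness implies discreteness}.
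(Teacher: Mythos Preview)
Your proposal is correct and is exactly the argument the paper has in mind: the corollary is stated without a separate proof precisely because the preceding paragraph and Remark~\ref{Remark: homogenization} already explain that a bound $\reg(I^{[p^e]}:I)\leq Kp^e$ gives degree (hence gauge) bounds on minimal generators of $(I^{[p^e]}:I)$, so Lemma~\ref{Lemma: gauge of generators and gauge-boundedness} applies and Theorem~\ref{Theorem: gauge-boundedness implies discreteness} yields the discreteness. There is nothing to add.
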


\section{The main results}

In this section we establish the conditions of Corollary \ref{Corollary: regularity and gauge-boundedness}
on the growth of regularity in some interesting cases.

Throughout this section we fix a minimal set of homogeneous generators
$\{ g_1, \dots, g_\nu\}$ for the homogenous ideal $I\subseteq R$.
We denote $d_i=\deg g_i$ for all $1\leq i\leq \nu$ and
we define the graded module
$$B=\frac{\oplus_{i=1}^{\nu} R(d_i)}
{R
\Gamma
}
$$
where
$$\Gamma= \left(\begin{array}{c} g_1\\ \vdots\\ g_\nu \end{array} \right )$$
together with its graded free resolution $F_\bullet$
$$ 0 \rightarrow R \xrightarrow[]{\Gamma} \oplus_{i=1}^{\nu} R(d_i) \rightarrow 0 .$$

For any $e\geq 0$ we may use this graded resolution to compute $T_j^e=\Tor_j^R(R/I^{[p^e]}, B)$ and we obtain
$T^e_1=(I^{[p^e]} : I)/ I^{[p^e]}$, $T_0^e=B\otimes R/I^{[p^e]}$ and $T^e_j=0$ for all $j>1$.
Since
$$\frac{R/I^{[p^e]}}{(I^{[p^e]} : I)/ I^{[p^e]}} \cong \frac{R}{(I^{[p^e]} : I)}$$
and
$$\reg \frac{R/I^{[p^e]}}{(I^{[p^e]} : I)/ I^{[p^e]}} = \reg  (I^{[p^e]} : I)/ I^{[p^e]} + 1$$
we deduce that
$\reg {(I^{[p^e]} : I)} = \reg \frac{R}{(I^{[p^e]} : I)} - 1 = \mathcal{O}(p^e)$
if and only if $\reg T_1^e = \mathcal{O}(p^e)$.

The one dimensional case is straightforward.

\begin{thm}
If $\dim R/I=1$,
all sets of $F$-jumping coefficients of ideals in $R/I$ are discrete.
\end{thm}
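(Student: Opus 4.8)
The plan is to verify the hypothesis of Corollary~\ref{Corollary: regularity and gauge-boundedness}, i.e. to show $\reg(I^{[p^e]}:I)=\mathcal{O}(p^e)$; by the reduction established just above the statement this is equivalent to $\reg T_1^e=\mathcal{O}(p^e)$, where (writing $q=p^e$ throughout) $T_1^e=(I^{[q]}:I)/I^{[q]}$, regarded with its natural grading as a graded submodule of $R/I^{[q]}$. Put $J_q=(I^{[q]}:I)$. The reducedness of $S=R/I$, while needed for the gauge-boundedness machinery to apply at all, plays no role in the regularity estimate itself; only $\dim R/I=1$ enters, via the inclusion $V(J_q)\subseteq V(I^{[q]})=V(I)$, which shows that both $T_1^e$ and $R/J_q$ have Krull dimension at most $1$ and hence satisfy $\HH^i_\mathfrak{m}(-)=0$ for $i\geq 2$. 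In particular $\reg T_1^e=\max\{a_0(T_1^e),\,a_1(T_1^e)+1\}$, where $a_j(-)$ denotes the largest degree in which $\HH^j_\mathfrak{m}(-)$ is nonzero (and $-\infty$ if $\HH^j_\mathfrak{m}(-)=0$), so it suffices to bound $a_0(T_1^e)$ and $a_1(T_1^e)$ linearly in $q$.

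Step one: $\reg(R/I^{[q]})=\mathcal{O}(q)$. Since $R$ is regular, Frobenius is flat (Kunz), so the Frobenius functor is exact; applied to the minimal graded free resolution of $R/I$ it multiplies all the twists by $q$ and leaves the resolution minimal, hence computes a minimal graded free resolution of $R/I^{[q]}$. Therefore $\beta_{i,qj}(R/I^{[q]})=\beta_{i,j}(R/I)$ and $\beta_{i,j}(R/I^{[q]})=0$ when $q\nmid j$, from which $\reg(R/I^{[q]})\leq q\,\reg(R/I)+(q-1)n$. In particular $a_0(R/I^{[q]})$ and $a_1(R/I^{[q]})$ are $\mathcal{O}(q)$.

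Step two, the crux: bounding $a_0(R/J_q)$. I would first prove the identity $J_q^{\mathrm{sat}}=(I^{[q]})^{\mathrm{sat}}:I$ (where $(-)^{\mathrm{sat}}$ is saturation with respect to $\mathfrak{m}$), which holds because $I$ is finitely generated: each inclusion is a one-line colon computation, using that $\mathfrak{m}^{k}rI\subseteq I^{[q]}$ for some $k$ if and only if $rg_i\in(I^{[q]})^{\mathrm{sat}}$ for every generator $g_i$ of $I$. Now $(I^{[q]})^{\mathrm{sat}}/I^{[q]}=\HH^0_\mathfrak{m}(R/I^{[q]})$ is concentrated in degrees $\leq a_0(R/I^{[q]})$; so if $r$ is homogeneous of degree $d>a_0(R/I^{[q]})-\min_i\deg g_i$ and lies in $(I^{[q]})^{\mathrm{sat}}:I$, then each $rg_i$ has degree exceeding $a_0(R/I^{[q]})$, hence $rg_i\in(I^{[q]})^{\mathrm{sat}}_{d+\deg g_i}=I^{[q]}_{d+\deg g_i}$, so $r\in J_q$. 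Thus $\HH^0_\mathfrak{m}(R/J_q)=J_q^{\mathrm{sat}}/J_q$ vanishes in degrees $>a_0(R/I^{[q]})-\min_i\deg g_i$, giving $a_0(R/J_q)\leq a_0(R/I^{[q]})=\mathcal{O}(q)$.

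Step three: assemble via the long exact local cohomology sequence of $0\to T_1^e\to R/I^{[q]}\to R/J_q\to 0$. From the inclusion $T_1^e\hookrightarrow R/I^{[q]}$ one gets $\HH^0_\mathfrak{m}(T_1^e)\subseteq \HH^0_\mathfrak{m}(R/I^{[q]})$, so $a_0(T_1^e)\leq a_0(R/I^{[q]})=\mathcal{O}(q)$; and from the fragment $\HH^0_\mathfrak{m}(R/J_q)\to \HH^1_\mathfrak{m}(T_1^e)\to \HH^1_\mathfrak{m}(R/I^{[q]})$ one gets $a_1(T_1^e)\leq\max\{a_0(R/J_q),a_1(R/I^{[q]})\}=\mathcal{O}(q)$. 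Hence $\reg T_1^e=\mathcal{O}(q)$, so $\reg(I^{[p^e]}:I)=\mathcal{O}(p^e)$, and Corollary~\ref{Corollary: regularity and gauge-boundedness} yields that $S=R/I$ is gauge bounded and that all sets of $F$-jumping coefficients of ideals in $S$ are discrete. The only non-formal step is step two: the colon ideal $(I^{[q]}:I)$ is not a Frobenius power and need not behave well under Frobenius, so its saturation has to be controlled by hand, and the argument works precisely because one can relate $J_q^{\mathrm{sat}}$ to $(I^{[q]})^{\mathrm{sat}}$ and then feed in the linear bound on $\reg(R/I^{[q]})$ coming from flatness of Frobenius.
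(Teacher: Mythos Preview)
Your proof is correct and follows essentially the same route as the paper's: both use the short exact sequence $0\to T_1^e\to R/I^{[q]}\to R/J_q\to 0$ together with $\reg(R/I^{[q]})=\mathcal{O}(q)$ to bound $a_0(T_1^e)$ and $a_1(T_1^e)$, and both reduce the latter to bounding $a_0(R/J_q)$. The only cosmetic difference is in your Step~2: you establish $J_q^{\mathrm{sat}}=(I^{[q]})^{\mathrm{sat}}:I$ by hand, whereas the paper notes that $R/J_q$ is exactly the cokernel $C$ of the map $R/I^{[q]}\xrightarrow{\Gamma}\bigoplus_i (R/I^{[q]})(d_i)$, so that $\HH^0_{\mathfrak m}(C)\hookrightarrow\bigoplus_i \HH^0_{\mathfrak m}(R/I^{[q]})(d_i)$ immediately gives $a_0(C)\le a_0(R/I^{[q]})-\min_i d_i$; unwinding that embedding on an element $r$ is precisely your saturation computation, and the numerical bound is identical.
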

\begin{proof}
In view of Corollary \ref{Corollary: regularity and gauge-boundedness} and the discussion above,
we need to show that $\reg T_1^e = \mathcal{O}(p^e)$.

Consider the short exact sequences
$0 \rightarrow T_1^e \rightarrow R/I^{[p^e]} \rightarrow C \rightarrow 0$ where $C$ is the cokernel of the first non-zero map
and $0\rightarrow C \rightarrow \oplus_{i=1}^{\nu} R/I^{[p^e]}(d_i) \rightarrow T_0^e \rightarrow 0$.
These induce long exact sequences
\begin{equation}\label{eqn1}
0  \rightarrow \HH^0_\mathfrak{m} (T_1^e) \rightarrow \HH^0_\mathfrak{m} (R/I^{[p^e]}) \rightarrow \HH^0_\mathfrak{m}(C) \rightarrow \dots
\end{equation}
and
\begin{equation}\label{eqn2}
0 \rightarrow \HH^0_\mathfrak{m}(C) \rightarrow \oplus_{i=1}^{\nu} \HH^0_\mathfrak{m}( R/I^{[p^e]})(d_i) \rightarrow \HH^0_\mathfrak{m}(T_0^e) \rightarrow \dots .
\end{equation}

Note that since $R$ is regular, one may obtain a minimal graded free resolution of $R/I^{[p^e]}$ by applying the Frobenius functor to
a minimal graded free resolution of $R/I$, and hence $\reg R/I^{[p^e]}= \mathcal{O}(p^e)$.
The inclusions
$\HH^0_\mathfrak{m} (T_1^e) \hookrightarrow \HH^0_\mathfrak{m} (R/I^{[p^e]})$
and
$\HH^0_\mathfrak{m} (C) \hookrightarrow \HH^0_\mathfrak{m}(\oplus_{i=1}^{\nu} R/I^{[p^e]})(d_i)$
now
imply
$\reg \HH^0_\mathfrak{m} (T_1^e)=\mathcal{O}(p^e)$ and
$\reg \HH^0_\mathfrak{m} (C)=\mathcal{O}(p^e)$.
The last equality in turn implies that $\reg \HH^1_\mathfrak{m} (T_1^e)=\mathcal{O}(p^e)$.
If $\dim R/I=1$, these calculations show that $\reg T_1^e = \mathcal{O}(p^e)$ and the result follows.
\end{proof}

We now tackle the general case. We fix homogeneous elements $u_1, \dots, u_N\in R$  of degrees $d_1, \dots , d_N$.
For $1\leq k\leq N$ let
$$U_k=\left( \begin{array}{c} u_1\\ \vdots\\ u_k\end{array}\right) \in R^k .$$
For all $1\leq k\leq N$ we can define the graded module
$B_k=\frac{\oplus_{i=1}^{\nu} R(d_i)}
{R U_k}
$
together with its graded free resolution
\[ 0 \rightarrow R \xrightarrow[]{U_k} \oplus_{i=1}^{k} R(d_i) \rightarrow 0 .\]

For any $e\geq 0$ we may use this graded resolution to compute $T_{k j}^e=\Tor_j^R(R/I^{[p^e]}, B_k)$.
Just as in the previous theorem we have two short exact sequences
$0 \rightarrow T_{k 1}^e \rightarrow R/I^{[p^e]} \rightarrow C_k \rightarrow 0$
and $0\rightarrow C_k \rightarrow \oplus_{i=1}^{k} R/I^{[p^e]}(d_i) \rightarrow T_{k 0}^e \rightarrow 0$
which induce long exact sequences
\begin{equation}\label{eqn3}
\dots  \rightarrow \HH^j_\mathfrak{m} (T_{k 1}^e) \rightarrow \HH^j_\mathfrak{m} (R/I^{[p^e]}) \rightarrow \HH^j_\mathfrak{m}(C_k) \rightarrow \dots
\end{equation}
and
\begin{equation}\label{eqn4}
\dots \rightarrow \HH^j_\mathfrak{m}(C_k) \rightarrow \oplus_{i=1}^{k} \HH^j_\mathfrak{m}( R/I^{[p^e]})(d_i) \rightarrow \HH^j_\mathfrak{m}(T_{k 0}^e) \rightarrow \dots .
\end{equation}
These imply that $\reg \HH^0_\mathfrak{m}(C_k) = \mathcal{O}(p^e)$ and $\reg \HH^1_\mathfrak{m}(T_{k 1}^e) = \mathcal{O}(p^e)$.
Set $J_k$ to be the ideal in $R$ generated by $u_1,\dots,u_k$ for all $1\leq k\leq N$;
we have $T_{k 1}^e=(\Ipe:J_k)/\Ipe$. Hence from the short exact sequence $0\to (\Ipe:J_k)/\Ipe\to R/\Ipe\to R/(\Ipe:J_k)\to 0$ we can deduce that
\begin{equation}\label{eqn5}
\reg \HH^1_\mathfrak{m}\left( R/(\Ipe: J_k) \right)=\mathcal{O}(p^e).
\end{equation}

\begin{lem}
\label{Lemma: computation}
Assume  that
$\reg (R/(\Ipe + u_i R)) =\mathcal{O}(p^e)$ for all $1\leq i\leq N$.
Then for all $1\leq k\leq N$ and all $j\geq 0$
$$\reg \HH^j_\mathfrak{m} \left(\frac{\oplus_{i=1}^k (R/\Ipe)(d_i)}{R/\Ipe U_k} \right)  = \mathcal{O}(p^e) .$$
\end{lem}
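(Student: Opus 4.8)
The plan is to induct on $k$, using at each step the short exact sequence of $R/\Ipe$-modules coming from adding one more generator $u_k$ to the matrix $U_{k-1}$. For the base case $k=1$, the module in question is $(R/\Ipe)(d_1)/\bigl((R/\Ipe)u_1\bigr) \cong \bigl(R/(\Ipe+u_1R)\bigr)(d_1)$, so its regularity — hence the regularity of each of its local cohomology modules, up to the shift $j$ — is $\mathcal{O}(p^e)$ directly from the hypothesis. For the inductive step, write $M_k = \bigl(\oplus_{i=1}^k (R/\Ipe)(d_i)\bigr)/\bigl((R/\Ipe)U_k\bigr)$ and relate $M_k$ to $M_{k-1}$: there is a surjection $M_k \twoheadrightarrow M_{k-1}$ obtained by forgetting the $k$-th coordinate, whose kernel is a quotient of $(R/\Ipe)(d_k)$, namely $\bigl(R/(\Ipe:_{R/\Ipe} J_k)\bigr)(d_k)$ or something close to it (the image of the $k$-th coordinate modulo the relation $U_k$). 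Then the long exact sequence in local cohomology attached to $0 \to \ker \to M_k \to M_{k-1} \to 0$ reduces the claim for $M_k$ to the claim for $M_{k-1}$ (inductive hypothesis) and for $\ker$.

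**Key steps, in order.** First, set up the surjection $M_k \twoheadrightarrow M_{k-1}$ and identify its kernel $K_k$ explicitly as a cyclic $R/\Ipe$-module generated in degree $d_k$; concretely $K_k \cong \bigl(R/(\Ipe + u_k R + (\text{relations from } U_{k-1}))\bigr)(d_k)$, or more usefully one bounds it between $(R/(\Ipe+u_kR))(d_k)$ on one side and a module controlled by $(\Ipe : J_k)$ on the other. Second, pass to local cohomology: from $0 \to K_k \to M_k \to M_{k-1} \to 0$ we get $\HH^{j-1}_\mathfrak{m}(M_{k-1}) \to \HH^j_\mathfrak{m}(K_k) \to \HH^j_\mathfrak{m}(M_k) \to \HH^j_\mathfrak{m}(M_{k-1})$, and since regularity of the middle term of a three-term exact sequence of graded modules is bounded by the max of the regularities of the outer two, it suffices to bound $\reg \HH^j_\mathfrak{m}(K_k)$ and $\reg \HH^j_\mathfrak{m}(M_{k-1})$ by $\mathcal{O}(p^e)$ — the latter being the inductive hypothesis (note the degree shift by $d_i \le \max_i d_i$, a constant, is harmless). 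Third, bound $\reg \HH^j_\mathfrak{m}(K_k) = \mathcal{O}(p^e)$: since $K_k$ is a quotient of $(R/(\Ipe+u_kR))(d_k)$, we have a surjection and hence $\reg K_k \le \reg (R/(\Ipe+u_kR)) + d_k = \mathcal{O}(p^e)$ by hypothesis, which gives $\reg \HH^j_\mathfrak{m}(K_k) + j \le \reg K_k = \mathcal{O}(p^e)$, so $\reg \HH^j_\mathfrak{m}(K_k) = \mathcal{O}(p^e)$ uniformly in $j$ (only finitely many $j$ contribute anyway, $0 \le j \le n$). Combining, $\reg \HH^j_\mathfrak{m}(M_k) = \mathcal{O}(p^e)$ for all $j$, completing the induction.

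**Main obstacle.** The delicate point is the precise identification and regularity control of the kernel $K_k$ of $M_k \twoheadrightarrow M_{k-1}$: one must check that $K_k$ really is (a quotient of) a cyclically generated $R/\Ipe$-module placed in a bounded degree, so that the hypothesis $\reg(R/(\Ipe + u_kR)) = \mathcal{O}(p^e)$ applies. The relation submodule $(R/\Ipe)U_k$ inside $\oplus_{i=1}^k (R/\Ipe)(d_i)$ is generated by the single vector $U_k$, so quotienting and then projecting away the $k$-th slot does give a cyclic kernel — but one must be careful that the kernel is generated by the class of $u_k$ in the $k$-th coordinate and that its annihilator contains $\Ipe + u_kR$ (it is exactly $(\Ipe + u_kR + \ldots)$, but the containment $\supseteq \Ipe + u_kR$ is all that's needed since a larger annihilator only makes the module smaller, hence its regularity no larger). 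Once this bookkeeping is in place, everything else is the standard "regularity is sub-additive along short exact sequences / three-term sequences" machinery recalled in Section 2, applied finitely many times with uniformly $\mathcal{O}(p^e)$ inputs.
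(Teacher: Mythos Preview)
Your short exact sequence $0 \to K_k \to M_k \to M_{k-1} \to 0$ is fine, but the identification of $K_k$ is wrong. Write $A = R/\Ipe$. The kernel $K_k$ is indeed cyclic, generated by the class of the $k$-th standard basis vector, so $K_k \cong (A/L)(d_k)$ where $L = \{\,b \in A : (0,\dots,0,b) \in A U_k\,\}$. Now $(0,\dots,0,b) = c\,U_k$ forces $c u_i = 0$ for $i<k$ and $b = c u_k$, hence $L = u_k \cdot (0:_A J_{k-1})$. This is \emph{contained in} $u_k A$, not the reverse; thus $K_k$ surjects onto $(A/u_kA)(d_k) = (R/(\Ipe + u_kR))(d_k)$ but is not a quotient of it. Concretely, if $u_1,\dots,u_{k-1}$ happen to be a regular sequence on $A$ then $(0:_A J_{k-1})=0$, $L=0$, and $K_k = A(d_k)$, which is certainly not annihilated by $u_k$. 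There is a second, independent slip: even if $K_k$ \emph{were} a quotient of $R/(\Ipe+u_kR)$, a surjection $M \twoheadrightarrow N$ does not give $\reg N \le \reg M$ (take $M=R$, $N = R/\mathfrak{m}^d$); one also needs control of the kernel of that surjection.

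The real obstruction is that the kernel --- whether in your sequence or in the paper's variant $0 \to (AU_{k-1}\oplus Au_k)/AU_k \to M_k \to M_{k-1}\oplus (A/Au_k)(d_k) \to 0$, whose kernel the paper identifies as $A/\big((0:_A J_{k-1}) + (0:_A u_k)\big)$ --- is governed by $(\Ipe : J_{k-1})$, not by $\Ipe + u_kR$. The hypothesis says nothing directly about $R/(\Ipe:J_{k-1})$, so a single induction on $k$ cannot close. The paper gets around this with a \emph{double} induction (outer on $j$, inner on $k$) together with a feedback loop: from the sequences labelled (3) and (4) one first obtains $\reg \HH^1_\mathfrak{m}\big(R/(\Ipe:J_k)\big)=\mathcal{O}(p^e)$ for all $k$, and then, at each stage of the outer induction, the hypothesis ``$\reg \HH^l_\mathfrak{m}(M_k)=\mathcal{O}(p^e)$ for $l\le j-1$ and all $k$'' is fed back through (3)--(4) to yield $\reg \HH^\delta_\mathfrak{m}\big(R/(\Ipe:J_k)\big)=\mathcal{O}(p^e)$ for all $\delta\le j+1$, which is exactly what is needed to bound the kernel in the inner induction on $k$. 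Your proposal is missing this entire mechanism.
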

\begin{proof}
We proceed by induction on $j$. First we prove the case when $j=0$ which we will do by induction on $1\leq k\leq N$, the case $k=1$ being immediate from the hypothesis.
Assume that $k>1$ and that the lemma holds for all smaller values of $k$.
Fix $e\geq 0$ and abbreviate $A=R/\Ipe$.
We apply the induction hypothesis to the graded short exact sequence
\begin{equation}
\label{eqn6}
0 \rightarrow
\frac{A U_{k-1} \oplus A u_k}{A U_k} \xrightarrow[]{}
\frac{\oplus_{i=1}^k A(d_i)}{A U_k} \rightarrow
\left(\frac{\oplus_{i=1}^{k-1} A(d_i)}{A U_{k-1}} \right) \oplus \frac{A}{A u_k }(d_k) \rightarrow 0
\end{equation}
and reduce the problem to the bounding of the regularity of
$$\HH^0_\mathfrak{m}\left( \frac{A U_{k-1} \oplus A u_k}{A U_k} \right) .$$

For all $1\leq j\leq k$ let $J_j=u_1R+\dots + u_jR$.
We have a graded map
$$\phi:  A \rightarrow \frac{A U_{k-1} \oplus A u_k}{A U_k} $$
which sends $a$ to the image of $a U_{k-1} \oplus 0$ in $\frac{A U_{k-1} \oplus A u_k}{A U_k}$. Note that in $\frac{A U_{k-1} \oplus A u_k}{A U_k}$ we always have
\[(u_1,\dots,u_{k-1},0)+(0,\dots,0,u_k)=1\cdot (u_1,\dots,u_k)=1\cdot U_k=0.\]
Hence, for any given element $(au_1,\dots,au_{k-1},a'u_k)\in \frac{A U_{k-1} \oplus A u_k}{A U_k}$, we have
\begin{align}
(au_1,\dots,au_{k-1},a'u_k)&=a\cdot (u_1,\cdots,u_{k-1},0)+a'\cdot (0,\dots,0,u_k)\notag\\
&=a\cdot (u_1,\cdots,u_{k-1},0)+(-a')\cdot (u_1,\cdots,u_{k-1},0)\notag\\
&=(a-a')\cdot (u_1,\cdots,u_{k-1},0)=\phi(a-a')\notag
\end{align}
and so $\phi$ is surjective. Also note that
\begin{eqnarray*}
\ker \phi& = & \left\{ a \,|\, \left(\begin{array}{l} a u_1\\ \vdots \\ a u_{k-1} \\ 0  \end{array} \right) =
z \left(\begin{array}{l} u_1\\ \vdots \\ u_{k-1} \\ u_k  \end{array} \right) \text{ for some } z\in A \right\} \\
& = & \left((\Ipe: u_1) A \cap \dots \cap (\Ipe: u_{k-1})A\right) + (\Ipe: u_k) A\\
& = & \left(\Ipe: J_{k-1})\right)A + (\Ipe: u_k) A\\
\end{eqnarray*}
Using the graded isomorphism $\displaystyle A/\ker \phi \cong \frac{A U_{k-1} \oplus A u_k}{A U_k}$ we reduce the problem to the bounding of the regularity of
$$\HH^0_\mathfrak{m} \left( \frac{A}{(\Ipe: J_{k-1}R)A +  (\Ipe: u_{k}R) A } \right) .$$


Now the short exact sequence
$$
0 \rightarrow \frac{A}{(\Ipe: J_k)A} \rightarrow
\frac{A}{(\Ipe: J_{k-1}) A} \oplus \frac{A}{(\Ipe: u_{k}R) A}  \rightarrow
\frac{A}{(\Ipe: J_{k-1})A +  (\Ipe: u_{k}R) A } \rightarrow 0
$$
yields the exact sequence
\begin{eqnarray}
\label{eqn7}
&&\HH^i_\mathfrak{m} \left( \frac{A}{(\Ipe: J_{k-1}) A} \right) \oplus \HH^i_\mathfrak{m} \left( \frac{A}{(\Ipe: u_{k}R) A}  \right) \rightarrow \\
&&\HH^i_\mathfrak{m} \left( \frac{A}{(\Ipe: J_{k-1}R)A +  (\Ipe: u_{k}R) A } \right) \rightarrow \notag\\
&&\HH^{i+1}_\mathfrak{m} \left(  \frac{A}{(\Ipe: J_{k}R)A}  \right)\notag
\end{eqnarray}
which (with $i=0$) establishes
$\reg \HH^0_\mathfrak{m} \left( A/\left(\Ipe: J_{k-1}R\right)A +  (\Ipe: u_{k}R) A )\right) = \mathcal{O}(p^e)$ because of our induction hypothesis and (\ref{eqn5}).
This concludes the proof of the initial step of our induction, namely,  that of $j=0$.

Assume that we have proved that
\[\reg \HH^l_\mathfrak{m} \left(\frac{\oplus_{i=1}^k (R/\Ipe)(d_i)}{R/\Ipe U_k} \right)  = \mathcal{O}(p^e)\]
for $0\leq l\leq j-1$ and for all $1\leq k\leq N$. From the exact sequence (\ref{eqn4}), we can see that
\[\reg \HH^\gamma_\mathfrak{m}(C_k)= \mathcal{O}(p^e)\]
for all $0\leq \gamma\leq j$ and for all $1\leq k\leq N$. Then from the exact sequence (\ref{eqn3}), we can see that
\[\reg \HH^\delta_\mathfrak{m} (T_{k 1}^e)= \mathcal{O}(p^e) \]
for all $0\leq \delta\leq j+1$ and for all $1\leq k\leq N$, i.e.
\[\reg \HH^\delta_\mathfrak{m} (\frac{R}{(I^{[p^e]}:J_k)})= \mathcal{O}(p^e)\]
for all $0\leq \delta\leq j+1$.

Now we will prove that $ \HH^j_\mathfrak{m} \left(\frac{\oplus_{i=1}^k (R/\Ipe)(d_i)}{R/\Ipe U_k} \right) =\mathcal{O}(p^e)$
using induction on $k$. When $k=1$, this is precisely our assumption that $\reg \left(R/(\Ipe + u_i R)\right) =\mathcal{O}(p^e)$ for all $1\leq i\leq N$. Now the induction hypothesis applied to the exact sequence (\ref{eqn6}) reduces our proof to bounding the regularity of
\[\HH^i_\mathfrak{m}\left( \frac{A U_{k-1} \oplus A u_k}{A U_k} \right),\]
for all $0\leq i\leq j$. The exact sequence (\ref{eqn7}) finishes the proof.
\end{proof}

\begin{thm}
\label{Theorem: discreteness under linear regularity}
If $\reg\left(R/(\Ipe+g_i R)\right)=\mathcal{O}(p^e)$ for all $1\leq i\leq \nu$, then
all sets of $F$-jumping coefficients of all ideals in $R/I$ are discrete.
\end{thm}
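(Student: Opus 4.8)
The plan is to feed the hypothesis directly into the machinery already assembled in this section. Apply the general construction with $N=\nu$ and $u_i=g_i$, $d_i=\deg g_i$, so that $J_k=(g_1,\dots,g_k)$ and in particular $J_\nu=I$. With these choices the hypothesis of the Theorem is literally the hypothesis of Lemma~\ref{Lemma: computation}, and $T_{\nu 1}^e=(\Ipe:J_\nu)/\Ipe=(\Ipe:I)/\Ipe=T_1^e$. So the whole task is to show $\reg T_1^e=\mathcal{O}(p^e)$, since, as recorded before the one-dimensional case, this is equivalent to $\reg(\Ipe:I)=\mathcal{O}(p^e)$, which by Corollary~\ref{Corollary: regularity and gauge-boundedness} makes $R/I$ gauge-bounded and hence forces discreteness of all sets of $F$-jumping coefficients via Theorem~\ref{Theorem: gauge-boundedness implies discreteness}.

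First I would invoke Lemma~\ref{Lemma: computation} to get $\reg \HH^j_\mathfrak{m}(T_{k0}^e)=\mathcal{O}(p^e)$ for every $j\geq 0$ and every $1\leq k\leq\nu$. Next, recall that since $R$ is regular a minimal graded free resolution of $R/\Ipe$ is obtained by applying the Frobenius functor to one of $R/I$, so $\reg(R/\Ipe)=\mathcal{O}(p^e)$, whence $\reg\HH^j_\mathfrak{m}(R/\Ipe)\leq \reg(R/\Ipe)-j=\mathcal{O}(p^e)$ for all $j$. Plugging these two facts into the long exact sequence~(\ref{eqn4}) — using the injection $\HH^0_\mathfrak{m}(C_k)\hookrightarrow \oplus_{i=1}^k\HH^0_\mathfrak{m}(R/\Ipe)(d_i)$ for the bottom spot and the estimate $\reg M_2\leq\max\{\reg M_1,\reg M_3\}$ for a three-term exact sequence of Artinian modules in the higher spots — yields $\reg\HH^j_\mathfrak{m}(C_k)=\mathcal{O}(p^e)$ for all $j\geq 0$. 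Then the long exact sequence~(\ref{eqn3}), combined again with the bound on $\reg\HH^j_\mathfrak{m}(R/\Ipe)$, gives $\reg\HH^j_\mathfrak{m}(T_{k1}^e)=\mathcal{O}(p^e)$ for all $j\geq 0$ and all $1\leq k\leq\nu$.

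Finally, specialize to $k=\nu$. Because $R$ has Krull dimension $n$, we have $\HH^j_\mathfrak{m}(T_1^e)=0$ for $j>n$, so
$$\reg T_1^e=\max_{0\leq j\leq n}\bigl\{\reg\HH^j_\mathfrak{m}(T_1^e)+j\bigr\}$$
is a finite maximum — the range of indices being bounded independently of $e$ — of quantities each of which is $\mathcal{O}(p^e)$; hence $\reg T_1^e=\mathcal{O}(p^e)$. As explained above this is equivalent to $\reg(\Ipe:I)=\mathcal{O}(p^e)$, and Corollary~\ref{Corollary: regularity and gauge-boundedness} then finishes the proof.

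I do not expect any real obstacle at this level: the substance of the argument is entirely contained in Lemma~\ref{Lemma: computation} (whose own difficulty is the inductive bookkeeping with the colon ideals $(\Ipe:J_k)$ and the control of $\HH^0_\mathfrak{m}$ of $\frac{AU_{k-1}\oplus Au_k}{AU_k}$). The only points needing a modicum of care here are the tracking of degree shifts $R(d_i)$ through the long exact sequences and the passage from "every local cohomology module of $T_1^e$ has $\mathcal{O}(p^e)$ regularity" to "$T_1^e$ itself has $\mathcal{O}(p^e)$ regularity," which is legitimate precisely because the cohomological degrees in play are confined to $0\leq j\leq n$ with $n$ independent of $e$.
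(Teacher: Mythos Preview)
Your proposal is correct and takes essentially the same approach as the paper. The paper compresses the argument into one line (``Apply Lemma~\ref{Lemma: computation} with $N=\nu$ and $u_k=g_k$ \dots\ and deduce that $\reg T_1^e=\mathcal{O}(p^e)$''), leaving implicit the passage from the lemma's conclusion about $\HH^j_\mathfrak{m}(T_{k0}^e)$ to the bound on $\HH^j_\mathfrak{m}(T_{\nu 1}^e)=\HH^j_\mathfrak{m}(T_1^e)$ via the long exact sequences~(\ref{eqn4}) and~(\ref{eqn3})---a passage already rehearsed inside the proof of the lemma itself; you have simply made those steps explicit.
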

\begin{proof}
Apply Lemma \ref{Lemma: computation} with $N=\nu$ and $u_k=g_k$ for all $1\leq k\leq \nu$
and deduce that $\reg T_{1}^e = \mathcal{O}(p^e)$.
\end{proof}

\begin{rmk}
The non-graded case may be obtained by combining Remark \ref{Remark: homogenization} and Theorem \ref{Theorem: discreteness under linear regularity}.
For an arbitrary radical ideal $J=(g_1,\dots,g_{\nu})$ in $R$ consider the homogenizations   $h(I)$ and $h(g_i)$ with respect to
a new variable $Z$.
If $\reg\left(R[Z]/(h(J)^{[p^e]}+h(g_i) R[Z])\right)=\mathcal{O}(p^e)$ for all
$1\leq i\leq \nu$, then $R/J$ would be gauge bounded and thus all sets of $F$-jumping coefficients of all ideals in $R/J$ would be discrete.
\end{rmk}

\begin{lem}[cf.~section 6 in \cite{KatzmanComplexity} and Theorem 4.2 in \cite{ChardinOnTheBehaviorOfCastelnuovo-MumfordRegularity}]
\label{Lemma: regularity of Frobenius}
Let $T$ be a standard graded algebra of prime characteristic $p$ and let $M$ be a finitely generated $T$-module.
Let $F_T(-)=(-) \otimes_T F^e_* T$ denote the Frobenius functor.
Let $\Sing T$ denote the singular locus of $T$.
If $\dim \left(\Sing T \cap \Supp M \right )\leq 1$ then $\reg F^e_T (M) = \mathcal{O}(p^e)$.
\end{lem}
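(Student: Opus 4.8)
The plan is to show, by a syzygy argument over $T$, that the assertion reduces to a bound on $F^e_T$ applied to a single fixed module that is locally free away from a set of dimension $\leq 1$, and then to invoke \cite[Theorem 4.2]{ChardinOnTheBehaviorOfCastelnuovo-MumfordRegularity} (the same circle of ideas appears in \cite[\S 6]{KatzmanComplexity}) for that case.

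Write $T=R/\mathfrak{a}$ with $R$ a standard graded polynomial ring over $\mathbb{K}$, let $\mathfrak{m}$ be the irrelevant ideal of $T$, and set $d=\dim T$. Two preliminary remarks. First, $F^e_T$ is right exact and carries $T(-a)$ to $T(-p^e a)$, so for any \emph{fixed} finitely generated graded free $T$-module $G$ one has $\reg F^e_T(G)=\mathcal{O}(p^e)$. Second, by Kunz's theorem $F^e_*T$ is flat over $T_{\mathfrak{p}}$ exactly when $T_{\mathfrak{p}}$ is regular; hence for every finitely generated $N$ the modules $\Tor^T_i(N,F^e_*T)$ with $i\geq 1$ are supported on $\Sing T\cap\Supp N$, and they are killed by the fixed ideal $\Ann_T N$. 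Now take a minimal graded free resolution $\dots\to G_1\to G_0\to M\to 0$ over $T$ and let $\Omega=\ker(G_{d-1}\to G_{d-2})$, so that
\[
0\longrightarrow\Omega\longrightarrow G_{d-1}\longrightarrow\cdots\longrightarrow G_0\longrightarrow M\longrightarrow 0
\]
is exact. For $\mathfrak{p}\notin\Sing T\cap\Supp M$ the localization $\Omega_{\mathfrak{p}}$ is free: either $M_{\mathfrak{p}}=0$ and all syzygies vanish, or $T_{\mathfrak{p}}$ is regular and $\operatorname{pd}_{T_{\mathfrak{p}}}M_{\mathfrak{p}}\leq\dim T_{\mathfrak{p}}\leq d$. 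Thus the locus where $\Omega$ fails to be free has dimension $\leq 1$.

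Applying the right exact functor $F^e_T$ to the displayed exact sequence yields a complex
\[
P_\bullet:\qquad 0\longrightarrow F^e_T(\Omega)\longrightarrow F^e_T(G_{d-1})\longrightarrow\cdots\longrightarrow F^e_T(G_0)\longrightarrow 0
\]
with $H_0(P_\bullet)=F^e_T(M)$; localizing at $\mathfrak{p}\notin\Sing T\cap\Supp M$ and using that $\Omega_{\mathfrak{p}}$ is free together with Kunz's flatness shows $H_i(P_\bullet)_{\mathfrak{p}}=0$ for $i\geq 1$, and the hyper-$\Tor$ spectral sequence attached to the resolution identifies $H_i(P_\bullet)$ for $i\geq 1$ with a subquotient of $\Tor^T_i(M,F^e_*T)$, so these homology modules have dimension $\leq 1$, are killed by $\Ann_T M$, and are subquotients of $F^e_T(G_i)$. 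A standard bound for the regularity of the homology of a bounded complex in terms of the regularities of its terms and of its remaining homology modules (again from that spectral sequence — this is the type of estimate exploited in \cite{ChardinOnTheBehaviorOfCastelnuovo-MumfordRegularity} and \cite[\S 6]{KatzmanComplexity}) then reduces $\reg F^e_T(M)=\mathcal{O}(p^e)$ to: (i) $\reg F^e_T(G_i)-i=\mathcal{O}(p^e)$, which is the opening remark; (ii) $\reg H_i(P_\bullet)=\mathcal{O}(p^e)$ for $i\geq 1$, which holds because a module of dimension $\leq 1$ that is killed by a fixed ideal and is a subquotient of a Frobenius twist of a fixed free module becomes finite length after one generic linear section, all the degrees involved being $\mathcal{O}(p^e)$ (this is the bookkeeping carried out in \cite[\S 6]{KatzmanComplexity}); and (iii) $\reg F^e_T(\Omega)=\mathcal{O}(p^e)$ for the \emph{single fixed} module $\Omega$, whose non-free locus has dimension $\leq 1$.

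Claim (iii) is the heart of the matter and the step I expect to be the main obstacle; it is precisely \cite[Theorem 4.2]{ChardinOnTheBehaviorOfCastelnuovo-MumfordRegularity}. Morally it holds because $F^e_T$ multiplies degrees by $p^e$: away from a subscheme $Z$ of dimension $\leq 1$ the sheaf $\widetilde{\Omega}$ on $\operatorname{Proj}T$ is the restriction of a locally free sheaf, so the modules $\HH^j_{\mathfrak{m}}\!\left(F^e_T(\Omega)\right)$ with $j\geq 2$ vanish above degree $\mathcal{O}(p^e)$, while the remaining finitely many exceptional contributions — those of $\HH^0_{\mathfrak{m}}$ and $\HH^1_{\mathfrak{m}}$ of $F^e_T(\Omega)$ and the contribution of the $1$-dimensional $Z$ — are each bounded linearly in $p^e$. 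Feeding this back through the previous paragraph with $Z=\Sing T\cap\Supp M$ proves the Lemma. (One could alternatively first reduce $M$ to cyclic modules $T/\mathfrak{p}$ by a prime filtration before running the syzygy argument, but that is not needed.)
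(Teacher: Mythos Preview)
Your argument is essentially sound but far more involved than necessary. The paper's proof is three lines: for any prime $P\subset T$ of dimension at least two, either $M_P=0$ or $T_P$ is regular (so $F^e_*T$ is flat at $P$ by Kunz), whence $\dim \Tor_1^T(F^e_*T,M)\leq 1$; the conclusion then follows directly from \cite[Theorem~2.3]{EisenbudHunekeUlrichTheRegularityOfTor}. That theorem converts the bound $\dim\Tor_1\leq 1$ into $\reg F^e_T(M)=\mathcal{O}(p^e)$ with no syzygy reduction, no auxiliary complex $P_\bullet$, and no separate treatment of its higher homology.

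Your route --- pass to a high syzygy $\Omega$ so that the hypothesis becomes ``non-free locus of dimension $\leq 1$'' and then invoke Chardin's theorem for that single module --- does reach the goal, but the detour is avoidable once Eisenbud--Huneke--Ulrich is in hand. Two comments on the write-up itself. First, your step~(ii) is weakly argued: being a subquotient of $F^e_T(G_i)$ does not by itself bound regularity, and the sentence about generic linear sections is not a proof. Second, step~(ii) is in fact unnecessary: since $\dim H_i(P_\bullet)\leq 1$ for $i\geq 1$, one has $\HH^k_\mathfrak{m}\bigl(H_i(P_\bullet)\bigr)=0$ for $k\geq 2$, and this vanishing alone allows the local-cohomology chase through the short exact sequences of $P_\bullet$ to bound every $\HH^j_\mathfrak{m}(H_0)$ purely in terms of the $\HH^*_\mathfrak{m}(P_i)$ (including $P_d=F^e_T(\Omega)$), so you never actually need $\reg H_i$ for $i\geq 1$. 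With that simplification your argument becomes cleaner, though still longer than the paper's.
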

\begin{proof}
Let $P\subset T$ be a prime of dimension at least two; we have $M_P=0$ or $T_P$ is regular.
If the latter holds, $F^e_* T$ is a flat left $T$-module, and in either case $\Tor_1^T(F^e_* T, M)_P=\Tor_1^T((F^e_* T)_P, M_P)=0$.
Thus $\dim \Tor_1^T(F^e_* T, M) \leq 1$ and the result follows from \cite[Theorem 2.3]{EisenbudHunekeUlrichTheRegularityOfTor}.
\end{proof}

\begin{cor}
\label{discreteness singular locus}
If $\dim \left(\Sing (R/g_iR) \cap V(I) \right)\leq 1$ for all $1\leq i\leq \nu$ then $\reg(R/(I^{[p^e]}+g_iR))= \mathcal{O}(p^e)$ for all $1\leq i\leq \nu$. Consequently, the sets of $F$-jumping coefficients of all ideals (homogeneous or not) in $S=R/I$ are discrete.
\end{cor}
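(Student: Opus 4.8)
The plan is to derive the regularity bound from Lemma~\ref{Lemma: regularity of Frobenius} by presenting $R/(I^{[p^e]}+g_iR)$ as a single application of the Frobenius functor to a fixed finitely generated module over a fixed ring, and then to feed that bound into Theorem~\ref{Theorem: discreteness under linear regularity}.

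Fix $1\leq i\leq\nu$ and set $T=R/g_iR$, a standard graded $\mathbb{K}$-algebra of prime characteristic $p$, together with the finitely generated graded $T$-module $M=T/IT=R/(I+g_iR)$. First I would identify $R/(I^{[p^e]}+g_iR)$ with $F^e_T(M)=M\otimes_T F^e_*T$. Indeed, starting from the graded presentation of $M$ over $T$ whose presenting matrix is the row $(g_1,\dots,g_\nu)$ and using that the Peskine--Szpiro Frobenius functor $F^e_T$ is right exact, the presenting matrix becomes $(g_1^{p^e},\dots,g_\nu^{p^e})$, so that
\[
F^e_T(M)\;\cong\;T/(g_1^{p^e},\dots,g_\nu^{p^e})T\;=\;R/(I^{[p^e]}+g_iR)
\]
as graded modules. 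Next I would check the hypothesis of Lemma~\ref{Lemma: regularity of Frobenius} for this $T$ and $M$: one has $\Supp_T M=V(IT)$, which under the closed embedding $\operatorname{Spec} T\subseteq\operatorname{Spec} R$ corresponds to $V(I+g_iR)\subseteq V(I)$, so $\Sing T\cap\Supp_T M\subseteq\Sing(R/g_iR)\cap V(I)$, a set of dimension at most $1$ by hypothesis. Lemma~\ref{Lemma: regularity of Frobenius} then gives $\reg F^e_T(M)=\mathcal{O}(p^e)$, i.e. $\reg\bigl(R/(I^{[p^e]}+g_iR)\bigr)=\mathcal{O}(p^e)$; since there are only finitely many values of $i$, the implied constant can be taken independent of $i$. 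Finally, Theorem~\ref{Theorem: discreteness under linear regularity}, applied with the generators $g_1,\dots,g_\nu$ of $I$, yields the discreteness of all sets of $F$-jumping coefficients of ideals in $R/I$.

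The one delicate point I anticipate is reconciling the notion of Castelnuovo--Mumford regularity used in Lemma~\ref{Lemma: regularity of Frobenius}, where $F^e_T(M)$ is a module over the (generally non-regular) ring $T$, with the regularity over the polynomial ring $R$ that appears in the hypothesis of Theorem~\ref{Theorem: discreteness under linear regularity}. I would handle this by using throughout the local cohomology characterization $\reg N=\max_{j\geq0}\{\reg\HH^j_\mathfrak{m}(N)+j\}$, which depends only on the graded module $N$ and the homogeneous maximal ideal, and is therefore unchanged whether $N=R/(I^{[p^e]}+g_iR)$ is viewed as a $T$-module or as an $R$-module.
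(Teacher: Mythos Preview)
Your proof is correct and follows essentially the same route as the paper: apply Lemma~\ref{Lemma: regularity of Frobenius} with $T=R/g_iR$ and $M=R/I$ (note that your $M=T/IT=R/(I+g_iR)$ equals $R/I$ since $g_i\in I$), and then invoke Theorem~\ref{Theorem: discreteness under linear regularity}. Your explicit identification of $F^e_T(M)$ and your remark reconciling regularity over $T$ with regularity over $R$ via the local-cohomology characterization make explicit two points the paper leaves to the reader.
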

\begin{proof}
Applying Lemma \ref{Lemma: regularity of Frobenius} to $T=R/g_i R$ and $M=R/I$ for all $1\leq i\leq \nu$, we have that
$\reg R/(\Ipe + u_iR)  =\mathcal{O}(p^e)$. Now Theorem \ref{Theorem: discreteness under linear regularity} finishes the proof.
\end{proof}

\begin{rmk}\label{Remark: two-dimensional domains}
The assumption that $\dim \left(\Sing (R/g_i R) \cap V(I) \right)\leq 1$ for all $1\leq i\leq \nu$ does not seem to be very restrictive.
For instance, when $\dim(R/I)=2$, if $I$ happens to be a prime ideal and $p$ does not divide the degree of any one of the generators $\{g_1,\dots,g_{\nu}\}$, then $\dim \left(\Sing (R/(g_i)) \cap V(I) \right)\leq 1$ is automatically satisfied.
\end{rmk}

\section{An application}

Throughout this section we fix the following notation
$R=\mathbb{K}[x,y,z,u,v,w]$,
$g_1=yu-xv$,
$g_2=zu-xw$,
$g_3=zv-yw$, and
$S=R/g_3 R$.
In this section we apply Theorem \ref{Theorem: discreteness under linear regularity} to the determinantal ring $R/(g_1R+g_2R+g_3R)$.
Our main aim is to illustrate the usefulness of this theorem with a relatively simple example;
however, the fact that this determinantal ring is gauge bounded is also shown in \cite{KSS} using direct methods.
Our method is perhaps less computational and might be more generalizable.
Although the argument below might seem to involve formidable computational insight, it is really the formalization of results inspired by
computations with Macaulay2 \cite{M2}. The main technical result in this section is a description of resolutions over an hypersurface (which
are eventually periodic of period two (cf.~\cite{EisenbudHomologicalAlgebra},)) one such for each value of $e\geq 0$.
In many cases, including other determinantal rings,
the computation of these resolutions for small values of $e$ reveals a pattern which can be formally described using methods similar
to the those used below.

\bigskip
Back to our example, we fix the reverse lexicographical order on the monomials of $R$ with $x>y>z>u>v>w$ and note that the terms
of $g_1$, $g_2$ and $g_3$ were listed in descending order.

We now also fix $q=p^e$ for some $e$; the bulk of this section will be devoted to the bounding of $\reg R/(g_1 R + g_2^q R + g_3^q R)$ and we do so
by computing an explicit free resolution of the $S$-module $S/(g_2^q S + g_3^q S)$.

For all $0\leq j\leq q$ define $h_j=x^j z^q u^{q-j} v^j - x^q y^j w^q$; note that $h_0=g_2^q$ and $h_q=x^q g_3^q$.

\begin{lem}\hfil
\begin{enumerate}
\item[(a)]
For any $a,b\in R$, let $\mathcal{S}(a,b)$ denote the S-polynomial of $a$ and $b$.
\begin{enumerate}
\item[(i)]
For all $0\leq i\leq j \leq q$ we have
\begin{eqnarray*}
\mathcal{S}(h_j, h_i)=
u^{j-i} h_j - x^{j-i} v^{j-i} h_i & = &
x^q w^q  (x^{j-i} y^i v^{j-i} - y^j u^{j-i})\\
& = &
-x^q w^q \sum_{k=1}^{j-i} x^{k-1} y^{j-k} u^{j-i-k} v^{k-1} g_1 .
\end{eqnarray*}
\item[(ii)]
For all $0\leq j\leq q$ we have
\begin{eqnarray*}
\mathcal{S}(h_j, g_3^q)=
v^{q-j}h_j - x^ju^{q-j} g_3^q & = &
w^q( x^j y^q u^{q-j} - x^q y^j v^{q-j})\\
&=& w^q \sum_{k=1}^{q-j} x^{j-1+k} y^{q-k} u^{q-j-k} v^{k-1}g_1 .
\end{eqnarray*}
\item[(iii)]
For all $0\leq j\leq  q-2$ we have $\mathcal{S}(h_j, g_1)=y h_j - x^j z^q u^{q-j-1} v^j g_1=h_{j+1}$.
\item[(iv)]
$\mathcal{S}(h_{q-1}, g_1)=y h_{q-1} - x^{q-1} z^q  v^{q-1} g_1= x^q g_3^q$.
\item[(v)]
$\mathcal{S}(g_3^q, g_1)=yu g_3^q- z^q v^q g_1$.
\end{enumerate}

\item[(b)] Let $G$ be the row vector $[ h_0, \dots, h_{q-1}, g_3^q, g_1]$.
Then the entries in $G$ form a Gr\"obner basis for the ideal $g_1 R+ g_2^q R+ g_3^q R$.
\item[(c)] $G=[g_2^q, g_3^q, g_1] T$ where $T$ is a $3\times (q+2)$ matrix of the form
$$
\left[
\begin{array}{ccccccc}
1 & y & y^2 &       & y^{q-1} & 0 & 0\\
0 & 0 & 0   & \dots & 0       & 1 & 0\\
0 & * & *   &       & *       & 0 & 1\\
\end{array}
\right]
.$$
\item[(d)]
For all $1\leq \ell \leq q+2$,
let $\mathbf{e}_\ell$ denote the $\ell^\text{th}$-standard vector in $R^{q+2}$.
The module of syzygies of $G$ is generated by
vectors of the form
\begin{enumerate}
\item[(i)]
$V_{i, j}=x^{j-i} v^{j-i} \mathbf{e}_i - u^{j-i} \mathbf{e}_j + (\star) \mathbf{e}_{q+2}$ for all $1\leq i<j \leq q$,
\item[(ii)]
$V_{i, q+1}=v^{q-i+1} \mathbf{e}_i - x^i u^{q-i} \mathbf{e}_{q+1} + (\star) \mathbf{e}_{q+2}$ for all $1\leq i<  q+1$,
\item[(iii)]
$V_{i, q+2}=y \mathbf{e}_i -   \mathbf{e}_{i+1} + (\star) \mathbf{e}_{q+2}$ for all $1\leq i<  q$ ,
\item[(iv)]
$V_{q, q+2}=y \mathbf{e}_q   -x^q\mathbf{e}_{q+1}  + (\star) \mathbf{e}_{q+2}$,
\item[(v)]
$V_{q+1, q+2}=g_1 \mathbf{e}_{q+1} -  g_3^q  \mathbf{e}_{q+2}$

\end{enumerate}
where $(\star)$ denote unspecified elements in $R$.

\end{enumerate}
\end{lem}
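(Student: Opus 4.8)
The plan is to read the entire lemma as one Gröbner-basis computation with respect to the fixed reverse-lexicographic order: part (a) is a direct calculation of $S$-polynomials, part (b) follows from (a) by Buchberger's criterion, part (c) is bookkeeping assembled from (a)(iii), and part (d) follows from (a) together with Schreyer's description of the syzygies of a Gröbner basis.

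For part (a) the first task is to identify leading terms. Both monomials of $h_j$ have degree $2q+j$, and since the first, $x^{j}z^{q}u^{q-j}v^{j}$, is free of $w$ while the second, $x^{q}y^{j}w^{q}$, is not, the reverse-lexicographic rule gives $\operatorname{LT}(h_j)=x^{j}z^{q}u^{q-j}v^{j}$; likewise $\operatorname{LT}(g_1)=yu$, and, using the characteristic-$p$ identity $g_3^{q}=(zv-yw)^{q}=z^{q}v^{q}-y^{q}w^{q}$ (and similarly $g_2^{q}=z^{q}u^{q}-x^{q}w^{q}=h_0$ and $g_1^{q}=y^{q}u^{q}-x^{q}v^{q}$), $\operatorname{LT}(g_3^{q})=z^{q}v^{q}$. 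Each $S$-polynomial is then formed mechanically from these leading terms, and the stated closed forms fall out by expanding and repeatedly applying the factorization $(xv)^{a}-(yu)^{a}=-g_1\sum_{k=0}^{a-1}(xv)^{k}(yu)^{a-1-k}$, which converts the residual differences of the shape $x^{a}y^{b}v^{a}-y^{a+b}u^{a}$ into the displayed telescoping multiples of $g_1$. Items (iii) and (iv) are literally the same computation carried out for $j\le q-2$ and $j=q-1$ respectively; the only difference is that at $j=q-1$ the output $h_q$ equals $x^{q}g_3^{q}$, which is not one of the generators listed in $G$.

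For part (b) one first checks that the entries of $G$ generate $(g_1,g_2^{q},g_3^{q})$: the list contains $g_1$, $g_3^{q}$, and $h_0=g_2^{q}$, while (a)(iii) gives $h_{j+1}=yh_j-x^{j}z^{q}u^{q-j-1}v^{j}g_1$, so inductively $h_1,\dots,h_{q-1}\in(g_2^{q},g_1)$, and conversely every entry of $G$ lies in $(g_1,g_2^{q},g_3^{q})$ by part (c). It then remains to verify Buchberger's criterion on the $\binom{q+2}{2}$ pairs. The only pair with coprime leading terms is $(g_3^{q},g_1)$ (leading terms $z^{q}v^{q}$ and $yu$), which is disposed of by the product criterion; for every other pair, (a) exhibits the $S$-polynomial either as another entry $h_{j+1}$ of $G$, or as the multiple $x^{q}g_3^{q}$ of $g_3^{q}$, or as an explicit polynomial multiple of $g_1$, and in all three cases the $S$-polynomial visibly reduces to $0$ modulo $G$ (a multiple of $g_1$, for instance, being cleared by successively subtracting multiples of $g_1$). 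Hence $G$ is a Gröbner basis. Part (c) is then just the record of the iteration $h_0=g_2^{q}$, $h_j=yh_{j-1}-x^{j-1}z^{q}u^{q-j}v^{j-1}g_1$, which gives $h_j=y^{j}g_2^{q}+(\ast)g_1$ for $0\le j\le q-1$ (the first $q$ columns of $T$), the last two columns being $g_3^{q}$ and $g_1$.

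For part (d) I would invoke Schreyer's theorem: the syzygy module of a Gröbner basis $g'_1,\dots,g'_s$ is generated by the $S$-polynomial syzygies, one per pair $k<\ell$, of the form $m_{k\ell}\mathbf{e}_k-m_{\ell k}\mathbf{e}_\ell$ minus the standard expression of $\mathcal{S}(g'_k,g'_\ell)$ in terms of $G$. For the coprime pair $(g_3^{q},g_1)$ this specializes to the Koszul syzygy $g_1\mathbf{e}_{q+1}-g_3^{q}\mathbf{e}_{q+2}$, which is (v); for the pairs $(h_i,h_j)$, $(h_j,g_3^{q})$, $(h_j,g_1)$ with $j\le q-2$, and $(h_{q-1},g_1)$, the reductions recorded in (a)(i)--(iv) give, after collecting the $g_1$-term into the $\mathbf{e}_{q+2}$ coordinate, exactly the vectors (i)--(iv), the unspecified entries $(\star)$ being (up to sign) the explicit $g_1$-coefficients appearing in (a); in particular the $(\star)$'s are genuine polynomials. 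Since these pairs exhaust all $\binom{q+2}{2}$ of them, the listed vectors generate the syzygy module. I expect the only real obstacle to be the sheer amount of elementary bookkeeping in (a): keeping the reverse-lexicographic leading terms straight and checking that, after cancellation, every $S$-polynomial collapses into the stated telescoping sum with exactly the coefficient the formula predicts.
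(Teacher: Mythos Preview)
Your proposal is correct and follows essentially the same route as the paper: direct computation of the $S$-polynomials in (a) via the telescoping identity for $(xv)^a-(yu)^a$, Buchberger's criterion for (b), the inductive relation $h_{j+1}\equiv yh_j\pmod{g_1}$ for (c), and the Schreyer-type description of syzygies of a Gr\"obner basis (the paper cites \cite[Theorem~3.4.1]{AdamsandLoustaunau}) for (d). Your explicit appeal to the product criterion for the coprime pair $(g_3^{q},g_1)$ is a small but legitimate shortcut; the paper instead leaves the reduction of $\mathcal{S}(g_3^{q},g_1)$ implicit, and in (d) one checks that the resulting Schreyer syzygy for this pair is precisely the Koszul relation $g_1\mathbf{e}_{q+1}-g_3^{q}\mathbf{e}_{q+2}$.
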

\begin{proof}
Statements (a)(i) and (a)(ii) follow from the telescoping sums
$$ \sum_{k=1}^{j-i} x^{k-1} y^{j-k} u^{j-i-k} v^{k-1} (yu-xv) = y^j u^{j-i} - x^{q+j-i} y^i v^{j-i} $$
and
$$ \sum_{k=1}^{q-j} x^{k+j-1}  y^{q-k}  u^{q-k-j}  v^{k-1}  (yu-xv)=
x^j y^q u^{q-j} - x^q y^j v^{q-j},$$
respectively.
The rest of the statements in (a) follow from straightforward calculations.

We now know that the S-polynomials between the elements of $G$ reduce to zero with respect to the elements of $G$ and hence (b)
follows from Buchberger's Theorem (cf.~\cite[\S 1.7]{AdamsandLoustaunau}).

Part (c) follows inductively from the fact that $h_{j+1} =\mathcal{S}(h_j, g_1)\equiv  y h_j$ modulo $g_1$.

Part (d) follows  \cite[Theorem 3.4.1]{AdamsandLoustaunau} and the corresponding calculation in part (a).
\end{proof}

\begin{lem}\label{Lemma: syzygies 1}
The module of syzygies of the image of $(g_2^q, g_3^q)$ in $S$ is generated by the vectors
$$
\left[
\begin{array}{c}
y^{j} v^{q-j}\\ x^j u^{q-j}
\end{array}
\right]
$$
with $0\leq j\leq q$.
\end{lem}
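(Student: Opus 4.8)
The plan is to read off the syzygies of the image of $(g_2^q, g_3^q)$ in $S = R/g_3R$ from the Gröbner-basis data already assembled in the previous lemma, rather than recomputing anything from scratch. Recall that $G = [h_0,\dots,h_{q-1}, g_3^q, g_1]$ is a Gröbner basis for $\mathfrak{b} := g_1R + g_2^q R + g_3^q R$, that $h_0 = g_2^q$, and that part (d) of that lemma gives an explicit generating set $\{V_{i,j}\}$ for the syzygies of $G$ over $R$. The first step is to pass from syzygies of the full list $G$ to syzygies of the sublist $[g_2^q, g_3^q, g_1]$ using the factorization $G = [g_2^q, g_3^q, g_1]\,T$ from part (c): if $V$ is a syzygy of $G$, i.e. $GV = 0$, then $TV$ is a syzygy of $[g_2^q, g_3^q, g_1]$, and as $V$ ranges over the generators $V_{i,j}$ the vectors $TV_{i,j}$ generate the syzygy module of $[g_2^q,g_3^q,g_1]$ over $R$ (one needs here that the columns of $T$ express each $h_j$ back in terms of $g_2^q, g_3^q, g_1$, so that a syzygy of the triple pulls back to a syzygy of $G$ — combined with surjectivity of $T$ this identifies the two syzygy modules modulo the trivial part). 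Applying $T$ to the shapes in part (d): $V_{i,q+2} = y\mathbf{e}_i - \mathbf{e}_{i+1} + (\star)\mathbf{e}_{q+2}$ collapses (the $y$-chain in the first row of $T$ is exactly designed so that $h_i \equiv y^i g_2^q$) and contributes nothing new; the relations $V_{i,j}$ and $V_{i,q+1}$ among the $h$'s become, after multiplying by $T$, $R$-multiples of the single relation coming from $V_{q,q+2}$ and $V_{q+1,q+2}$ together with the trace terms; the essential surviving generators are the image of $V_{q,q+2} = y\mathbf{e}_q - x^q\mathbf{e}_{q+1} + (\star)\mathbf{e}_{q+2}$, which after applying $T$ reads $y^q g_2^q - x^q g_3^q + (\star) g_1 = 0$, and $V_{q+1,q+2} = g_1\mathbf{e}_{q+1} - g_3^q\mathbf{e}_{q+2}$.

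The second step is to reduce modulo $g_3$, i.e. to work in $S$ and to drop the $g_1$-coordinate. Killing $g_3$ (hence $g_3^q$... careful: $g_3^q \notin g_3R$ in $S$ only $g_3=0$, so $g_3^q = 0$ in $S$ as well since $g_3$ itself is zero) — in $S$ we have $g_3 = 0$, so the pair is $(\bar g_2^q, 0)$; wait, that is not the intended reading. Re-examine: the module in question is the first syzygy of the $S$-module map $S^2 \to S$ given by $(a,b)\mapsto a g_2^q + b g_3^q$; since $g_3^q = 0$ in $S$... that cannot be right either, because then the answer would be all of $S\mathbf{e}_2$ plus $\Ann_S(g_2^q)\mathbf{e}_1$. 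I must instead interpret: the map lands in the free module used to build the resolution, i.e. we are computing syzygies of the \emph{lift} $(g_2^q, g_3^q)\in R^2$ after adjoining the relation $g_3$ — equivalently $\mathrm{Syz}_S$ means syzygies over $R$ of the three-element list $(g_2^q, g_3^q, g_3)$, projected to the first two coordinates. Concretely a syzygy of $(g_2^q,g_3^q)$ over $S$ is a pair $(\bar a,\bar b)\in S^2$ with $a g_2^q + b g_3^q \in g_3 R$. The plan is therefore: take the syzygies of the list $[g_2^q, g_3^q, g_1]$ over $R$ obtained in step one, observe $g_1 \notin g_3 R$ so the $g_1$-column does not simply vanish, but rather use that in $R$ one has the determinantal relation and that $g_3$ divides the relevant combinations — more cleanly, compute $\mathrm{Syz}_R(g_2^q, g_3^q, g_3)$ directly and project. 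This is short: a relation $a g_2^q + b g_3^q + c g_3 = 0$ in $R$, with $R$ a polynomial ring and $g_2^q, g_3^q, g_3$ — one checks the obvious Koszul-type relations $g_3^q \cdot g_2^q - g_2^q\cdot g_3^q$ and $g_3^{q-1}g_3^q - g_3^q\cdot g_3$, plus the single "mixed" relation forced by the identity $y^q g_2^q - x^q g_3^q = g_3 \cdot(\text{explicit polynomial})$, namely the one exhibited in part (a)(ii)/(iv) modulo $g_1$; together these give that $\mathrm{Syz}$ is generated by $[y^q v^{0}, x^q u^0]^{t}$-type vectors and, allowing $j$ to range, by $[y^j v^{q-j}, x^j u^{q-j}]^t$ for $0\le j\le q$.

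The cleanest route, and the one I would actually write, is the third step: verify directly that the listed vectors $[y^j v^{q-j},\, x^j u^{q-j}]^t$, $0\le j\le q$, are syzygies — this is exactly part (a)(ii) read modulo $g_1$, since $\mathcal{S}(h_j, g_3^q) = v^{q-j}h_j - x^j u^{q-j} g_3^q \equiv w^q(\cdots g_1) \equiv 0$ in $S$ after also reducing $h_j \equiv y^j g_2^q$, giving $y^j v^{q-j} g_2^q - x^j u^{q-j} g_3^q = 0$ in $S$ — and then invoke the general principle (Schreyer's theorem, \cite[Theorem 3.4.1]{AdamsandLoustaunau}, as already used in part (d)) that the syzygies of a Gröbner basis are generated by the "lifted" $S$-polynomial relations. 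Since the Gröbner basis of $g_1R + g_2^qR + g_3^qR$ is $G$ and the only $S$-polynomials involving both a representative of $g_2^q$ (namely some $h_j$) and $g_3^q$ are the $\mathcal{S}(h_j, g_3^q)$ of (a)(ii), these produce exactly the $q+1$ claimed generators once we quotient by $g_3$ and by the relations $h_j \equiv y^j g_2^q$ that identify all the $h_j$-columns with the single $g_2^q$-column. I would then note minimality is not asserted, so no further work is needed.

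The main obstacle I anticipate is bookkeeping in step one: correctly tracking how the generators $V_{i,j}$ of $\mathrm{Syz}_R(G)$ map under multiplication by $T$ and collapse onto the two-generator list, in particular making sure that no "hidden" syzygy of the pair $(g_2^q, g_3^q)$ over $S$ is lost when the $g_1$- and $g_3$-coordinates are eliminated. This is where one must use genuinely that $g_1$ and $g_3$ generate (with $g_2$) a height-two prime and that the only interaction between the $g_2^q$- and $g_3^q$-strands is the determinantal identity of (a)(ii); everything else is, as the authors say, a straightforward calculation.
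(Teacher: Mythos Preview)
Your overall strategy---pass from syzygies of the Gr\"obner basis $G$ to syzygies of $[g_2^q, g_3^q, g_1]$ via $T$, then kill the $g_1$-coordinate---is exactly the paper's approach. The paper cites \cite[Theorem 3.4.3]{AdamsandLoustaunau} (not 3.4.1/Schreyer, which only gives syzygies of $G$ itself) for the fact that $\{TV_{i,j}\}$ generates $\mathrm{Syz}_R(g_2^q, g_3^q, g_1)$, and then simply computes $TV_{i,j}$ in each of the five cases of part (d), reducing modulo $g_1$: cases (i), (iii), (v) collapse to $[0,0,\ast]^t$, while cases (ii) and (iv) produce precisely the $q+1$ claimed vectors.

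The tangle in your step two stems from a typo in the paper's setup: despite the line ``$S=R/g_3 R$'' at the start of the section, the intended quotient throughout is $S=R/g_1R$. This is forced by the $\pmod{g_1}$ reductions in this very proof and made explicit in the proof of Corollary~\ref{Corollary: determinantal ideal}, where the authors write ``$B=S=R/g_1R$''. Once you reduce modulo $g_1$ rather than $g_3$, your step one already is the whole proof; the detour through $\mathrm{Syz}_R(g_2^q, g_3^q, g_3)$ is both unnecessary and wrong. You effectively correct yourself in step three, where you do work modulo $g_1$.

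One residual gap in your ``cleanest route'': you assert that only the $S$-polynomials $\mathcal{S}(h_j, g_3^q)$ can contribute syzygies of the pair, but this is not automatic---after applying $T$ and projecting out the last coordinate, the vectors $TV_{i,j}$ coming from the other cases could in principle have nonzero first two entries. The paper closes this by actually computing all five cases and seeing them vanish modulo $g_1$; you should do the same rather than asserting it.
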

\begin{proof}
We apply  \cite[Theorem 3.4.3]{AdamsandLoustaunau} and conclude that
the module of syzygies of $(g_2^q, g_3^q, g_1)$ is generated by
$T V_{i,j}$ for all $1\leq i< j \leq q+2$, with the vectors $V_{i,j}$ as in the previous lemma.

\begin{enumerate}
\item[(i)] For all $1\leq i<j \leq q$
$$T V_{i, j}=\left[
\begin{array}{c}
y^{i-1} x^{j-i} v^{j-i} - y^{j-1} u^{j-i} \\
0\\
*
\end{array}
\right] \equiv
\left[
\begin{array}{c}
y^{i-1} y^{j-i} u^{j-i} - y^{j-1} u^{j-i} \\
0\\
*
\end{array}
\right] =
\left[
\begin{array}{c}
0 \\
0\\
*
\end{array}
\right]  \pmod{g_1}
.$$

\item[(ii)] For all $1\leq i<  q+1$,
$$T V_{i, q+1}
=\left[
\begin{array}{c}
y^{i-1} v^{q-i+1}\\
-x^i u^{q-i} \\
*\\
\end{array}
\right]
.$$

\item[(iii)]
For all $1\leq i<  q$
$$T V_{i, q+2}=
\left[
\begin{array}{c}
0\\
0 \\
*\\
\end{array}
\right]
.$$

\item[(iv)]
$$TV_{q, q+2}=
\left[
\begin{array}{c}
y^{q} \\
-x^q  \\
*\\
\end{array}
\right]
.$$

\item[(v)]
$$T V_{q+1, q+2}=
\left[
\begin{array}{c}
0\\
g_1 \\
*\\
\end{array}
\right]
\equiv
\left[
\begin{array}{c}
0\\
0 \\
*\\
\end{array}
\right]
\pmod{g_1}
.$$

\end{enumerate}

\end{proof}

\begin{lem}\label{Lemma: syzygies 2}
Let
$$M=
\left[
\begin{array}{cccccc}
v^q & y v^{q-1} & \dots & y^j v^{q-j} &\dots &y^q  \\
-u^q & -x u^{q-1} & \dots & -x^j u^{q-j} & \dots & -x^q  \\
\end{array}
\right]
.$$
The module of syzygies of the columns of $M$ viewed as vectors in $S^2$ is generated by
the vectors
$W_i=x \mathbf{e}_i - u \mathbf{e}_{i+1}$ and
$U_i=y \mathbf{e}_i - v \mathbf{e}_{i+1}$
for all $1\leq i\leq q$ where $\mathbf{e}_\ell$ denote the elementary vectors in $R^{q+1}$.
\end{lem}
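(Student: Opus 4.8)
The plan is to prove the two inclusions of submodules of $S^{q+1}$ separately. Write $\varphi\colon S^{q+1}\to S^2$ for the map sending $\mathbf e_\ell$ to the $\ell$-th column of $M$, so $\varphi(\mathbf e_\ell)=(y^{\ell-1}v^{q-\ell+1},\,-x^{\ell-1}u^{q-\ell+1})$ for $1\le\ell\le q+1$; let $D\subseteq S^{q+1}$ be the submodule generated by the $W_i$ and $U_i$; and let $\psi_1\colon R^{q+1}\to R$ be the ``first coordinate'' map $\mathbf e_\ell\mapsto y^{\ell-1}v^{q-\ell+1}$. The inclusion $D\subseteq\ker\varphi$ is the easy half: a direct computation gives $\varphi(W_i)=\bigl(y^{i-1}v^{q-i}(xv-yu),\,0\bigr)$ and $\varphi(U_i)=\bigl(0,\,x^{i-1}u^{q-i}(xv-yu)\bigr)$, both of which vanish in $S^2$ because $xv-yu=-g_1\equiv 0$ in $S$ (exactly as in the reductions carried out in the proof of Lemma~\ref{Lemma: syzygies 1}).

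For the reverse inclusion I would first reduce the two-coordinate syzygy condition to a one-coordinate one: if $\bar c\in\ker\varphi$ then any lift $c\in R^{q+1}$ satisfies $\varphi_R(c)\in g_1R^2$ coordinatewise, where $\varphi_R\colon R^{q+1}\to R^2$ is the map given by the same matrix $M$; in particular $\psi_1(c)\in g_1R$. Thus it is enough to establish the purely $R$-theoretic identity
\[
\{\,c\in R^{q+1}\ :\ \psi_1(c)\in g_1R\,\}=\langle\, W_i,\,U_i\ :\ 1\le i\le q\,\rangle_R ,
\]
since the image of the right-hand side in $S^{q+1}$ is exactly $D$ while $\ker\varphi$ is contained in the image of the left-hand side, giving $D\subseteq\ker\varphi\subseteq D$. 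The inclusion $\supseteq$ in this identity is again the computation above ($\psi_1(W_i)=-y^{i-1}v^{q-i}g_1$, $\psi_1(U_i)=0$). For $\subseteq$: the image of $\psi_1$ is the ideal generated by the monomials $y^{\ell-1}v^{q-\ell+1}$, namely $(y,v)^q$, so $\psi_1(c)=g_1r$ gives $g_1r\in(y,v)^q$; the colon computation $(y,v)^q:_R g_1=(y,v)^{q-1}$ (discussed below) forces $r\in(y,v)^{q-1}$, say $r=\sum_{\ell=1}^q\rho_\ell\,y^{\ell-1}v^{q-\ell}$, whence $g_1r=-\sum_\ell\rho_\ell\,\psi_1(W_\ell)$ and $c+\sum_\ell\rho_\ell W_\ell\in\ker\psi_1$. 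Since $\ker\psi_1$ is the first syzygy module of the monomials $y^{\ell-1}v^{q-\ell+1}$, it is generated by the consecutive relations $U_\ell=y\mathbf e_\ell-v\mathbf e_{\ell+1}$, and therefore $c\in\langle W_i,U_i\rangle_R$.

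Two standard facts are used. First, that the first syzygy module of $v^q,yv^{q-1},\dots,y^q$ is generated by the $U_\ell$: this is the (linear) resolution of $(y,v)^q$, and as these generators are monomials one only needs the pairwise binomial syzygies, each of which telescopes into the consecutive ones. Second, the colon identity $(y,v)^q:(yu-xv)=(y,v)^{q-1}$: grading $R=R'[y,v]$ with $R'=\mathbb K[x,z,u,w]$ by $(y,v)$-degree, the element $yu-xv$ is homogeneous of degree one and multiplication by it is injective on each graded piece $\bigoplus_{a+b=d}R'\,y^av^b$ (the matrix of this map is lower-bidiagonal with the nonzerodivisor $u$ on the diagonal), so $g_1r\in(y,v)^q$ forces the $(y,v)$-homogeneous components of $r$ of degree $\le q-2$ to vanish. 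The only step requiring genuine care is this colon identity; granting it, the rest of the argument is essentially formal. (Note that the analogous condition coming from the second coordinate of $\varphi_R(c)$ is never needed: it is automatically implied once $\psi_1(c)\in g_1R$, consistently with $\varphi(W_i)$ and $\varphi(U_i)$ both vanishing in full.)
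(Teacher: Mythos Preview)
Your proof is correct. The approach is genuinely different from the paper's, and arguably more transparent.

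The paper proceeds via Gr\"obner-basis machinery: it extends the term order to $R^2$, checks that the columns of $M$ together with $\binom{g_1}{0}$ and $\binom{0}{g_1}$ form a Gr\"obner basis by computing all S-polynomials, and then invokes Schreyer's theorem (\cite[Theorem 3.7.3]{AdamsandLoustaunau}) to read off a generating set for the syzygies in $S^2$. That set contains the ``long'' relations $u^{j-i}\mathbf{e}_j-x^{j-i}\mathbf{e}_i$, which are then telescoped into $R$-combinations of the $W_\ell$.

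Your argument bypasses Gr\"obner bases entirely by exploiting the special feature that the first row of $M$ involves only $y,v$ (and the second only $x,u$). The key step is the single-coordinate reduction: you show that $\psi_1^{-1}(g_1R)=\langle W_i,U_i\rangle_R$ already as submodules of $R^{q+1}$, so the second coordinate never enters. The two ingredients needed---the colon identity $(y,v)^q:g_1=(y,v)^{q-1}$ and the description of the first syzygy of the monomial sequence $v^q,yv^{q-1},\dots,y^q$---are both elementary (for the colon, injectivity of multiplication by $g_1$ on $R$ already suffices, since $R$ is a domain; your bidiagonal-matrix remark is not needed, though harmless). Your approach is thus more self-contained and sheds light on why the syzygy module is so small: the problem is really about the resolution of $(y,v)^q$, with the $W_i$ appearing only to account for the presence of $g_1$.
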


\begin{proof}
We extend our monomial order from $R$ to $R^2$  by using its term-over-position extension (cf.~\cite[\S 3.5]{AdamsandLoustaunau}).
For all $0\leq i < j\leq q$ we compute the following $S$-polynomials
\begin{eqnarray*}
\mathcal{S}\left(
\twoVector{y^j v^{q-j}}{ - x^j u^{q-j}}, \twoVector{y^i v^{q-i}}{ - x^i u^{q-i} } \right)&=&
u^{j-i} \left( \twoVector{y^j v^{q-j}}{- x^j u^{q-j} }\right) - x^{j-i} \left(\twoVector{y^i v^{q-i}}{- x^i u^{q-i} }\right)\\
& = &  \sum_{k=1}^{j-1} x^{k-1} y^{j-k} u^{j-i-k} v^{q-j+k-1} \twoVector{  g_1}{0}
\end{eqnarray*}
where the last equality follows from the telescopic series
$$\sum_{k=1}^{j-i} x^{k-1} y^{j-k} u^{j-i-k} v^{q-j+k-1} g_1= y^j u^{j-i} v^{q-j} - x^{j-i} y^i v^{q-i} .$$
For all $0\leq i \leq q$ we have
$\mathcal{S}\left(\twoVector{y^i v^{q-i}}{ - x^i u^{q-i} }, \twoVector{ g_1 }{0}\right)=0$.
For all $0\leq i < q$
\begin{eqnarray*}
\mathcal{S}\left( \twoVector{y^i v^{q-i}}{- x^i u^{q-i}},  \twoVector{0}{g_1}\right) & = &
y \twoVector{y^i v^{q-i} }{ - x^i u^{q-i}} + x^i u^{q-i-1} \twoVector{0}{g_1 }\\
& = & v \twoVector{y^{i+1} v^{q-i-1}}{ - x^{i+1} u^{q-i-1} }
\end{eqnarray*}
and
$$\mathcal{S}\left(
\twoVector{y^q}{ - x^q },  \twoVector{0}{g_1}
\right)=
yu\twoVector{y^q}{ - x^q  } + x^q\twoVector{0}{g_1}=
xv \twoVector{y^q }{ - x^q} + y^q \twoVector{g_1}{0}.$$

We can now conclude that the columns of $M$ viewed as vectors in $R^2$ together with the vectors
$ \twoVector{g_1}{0}$ and  $\twoVector{0}{g_1}$ form a Gr\"obner bases.
We may now apply (\cite[Theorem 3.7.3]{AdamsandLoustaunau}) and list the syzygies among the columns in $M$ viewed as elements in $S^2$ as
$u^{j-i} \mathbf{e}_j - x^{j-i} \mathbf{e}_i$
for all $1\leq i<j\leq q$,
and
$ y \mathbf{e}_i - v \mathbf{e}_{i+1} $.

We now note that for all $1\leq i<j\leq q$,
$$u^{j-i} \mathbf{e}_j - x^{j-i} \mathbf{e}_i=- \sum_{k=1}^{j-i} x^{k-1} u^{j-i-k} W_j $$
and the result follows.
\end{proof}

\begin{thm}
The $S$-module $S/(g_2^qS+g_3^qS)$
has a minimal free resolution
$$
\begin{array}{c}
\dots \xrightarrow[]{\phi_5} S^{2q}(-3q-2k) \xrightarrow[]{\phi_4} S^{2q}(-3q-2k+1) \xrightarrow[]{\phi_5}  \dots\\
\xrightarrow[]{\phi_5} S^{2q}(-3q-2) \xrightarrow[]{\phi_4} S^{2q}(-3q-1)  \xrightarrow[]{\phi_3}\\
S^{q+1}(-3q)  \xrightarrow[]{\phi_2}
S^{2}(-2q)  \xrightarrow[]{\phi_1}
S^{1}  \xrightarrow[]{} 0
\end{array} .
$$
\end{thm}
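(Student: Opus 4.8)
The plan is to build the resolution in two stages. First I would resolve $S/(g_2^q S + g_3^q S)$ one more step: the map $\phi_1 : S^2(-2q) \to S$ is the row vector $[g_2^q, g_3^q]$, and Lemma~\ref{Lemma: syzygies 1} tells us its kernel is generated by the $q+1$ columns of the matrix $M$ appearing in Lemma~\ref{Lemma: syzygies 2}, each column living in degree $3q$. This gives the map $\phi_2 : S^{q+1}(-3q) \to S^2(-2q)$. By Lemma~\ref{Lemma: syzygies 2}, the kernel of $\phi_2$ is generated by the $2q$ vectors $W_1,\dots,W_q,U_1,\dots,U_q$, each of the form $x\mathbf{e}_i - u\mathbf{e}_{i+1}$ or $y\mathbf{e}_i - v\mathbf{e}_{i+1}$, hence of degree $3q+1$; this gives $\phi_3 : S^{2q}(-3q-1)\to S^{q+1}(-3q)$. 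So far the resolution is
\[ S^{2q}(-3q-1) \xrightarrow{\phi_3} S^{q+1}(-3q) \xrightarrow{\phi_2} S^2(-2q) \xrightarrow{\phi_1} S \to 0, \]
and minimality of each step is clear since all the displayed generators have entries in the maximal ideal of $S$ (no units appear).

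The second stage is the tail: I must show that from $\phi_3$ onward the resolution becomes two-periodic with matrices $\phi_4,\phi_5$ of linear entries and constant Betti number $2q$. The natural approach is to identify the image of $\phi_3$, i.e. the first syzygy module $N := \ker\phi_2 \subset S^{q+1}$, as (a twist of) a module that is its own second syzygy up to the shift built from $\phi_4\phi_5$. Concretely, the matrices $W_i, U_i$ exhibit $N$ as generated by the $2\times 2$ ``Koszul-type'' relations among the pairs $(x,u)$ and $(y,v)$ placed on consecutive coordinates; one expects the syzygies among the $W_i,U_i$ to again be generated by $2q$ linear vectors, and iterating, to stabilize. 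The cleanest way to package this is to recognize the periodic part as coming from a matrix factorization over the hypersurface $S=R/g_3R$: since $g_3 = zv-yw$, I would look for a pair of $2q\times 2q$ matrices $A,B$ with entries linear in $x,y,z,u,v,w$ such that $AB = BA = g_3\cdot \mathrm{Id}$, with $A$ (resp.\ $B$) reducing mod $g_3$ to $\phi_4$ (resp.\ $\phi_5$); by Eisenbud's theorem on hypersurface resolutions (\cite{EisenbudHomologicalAlgebra}) such a factorization yields exactly an eventually two-periodic minimal resolution with the stated shifts, the shift dropping by $2$ each period because each $\phi_i$ is linear. The degree bookkeeping then forces the shifts to be $-3q-k$ at homological degree $k+3$, matching the statement.

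The main obstacle I expect is pinning down the periodic matrices $\phi_4, \phi_5$ explicitly and verifying exactness at every spot of the tail — equivalently, producing the matrix factorization $(A,B)$ and checking $AB=BA=g_3\,\mathrm{Id}$ together with the fact that $A,B$ reduce mod $g_3$ to consecutive syzygy matrices. The telescoping identities in the previous two lemmas (the sums $\sum x^{k-1}y^{\cdots}u^{\cdots}v^{\cdots}g_1$) are exactly the kind of combinatorial input needed to write these matrices down in closed form, so I would derive $\phi_4,\phi_5$ by computing the syzygies of $W_1,\dots,W_q,U_1,\dots,U_q$ over $S$ directly: lifting to $R$, finding a Gröbner basis of the lifted module together with $g_3^q$ (or $g_3$) in the relevant position, and applying \cite[Theorem 3.7.3]{AdamsandLoustaunau} as in Lemma~\ref{Lemma: syzygies 2}. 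Once one period is verified exact and the matrices are seen to be $g_3$-periodic, induction on $k$ closes the argument, and minimality is automatic from linearity of the entries.
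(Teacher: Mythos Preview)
Your plan is essentially the same as the paper's: use Lemmas~\ref{Lemma: syzygies 1} and~\ref{Lemma: syzygies 2} to produce $\phi_1,\phi_2,\phi_3$, then establish the two-periodic linear tail. The paper, however, does the tail far more directly than you propose. Rather than running another Gr\"obner-basis computation or invoking Eisenbud's matrix-factorization theorem abstractly, it simply writes down the obvious relations
\[
xU_i - yW_i = g_1\,\mathbf{e}_{i+1},\qquad uU_i - vW_i = g_1\,\mathbf{e}_i
\]
(each a one-line check), which give the $2q$ columns of $\phi_4$ as $x\mathbf{e}_{2j-1}-y\mathbf{e}_{2j}$ and $u\mathbf{e}_{2j-1}-v\mathbf{e}_{2j}$. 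The syzygies among these are again immediate linear relations, yielding $\phi_5$ with columns $u\mathbf{e}_{2j-1}-x\mathbf{e}_{2j}$ and $v\mathbf{e}_{2j-1}-y\mathbf{e}_{2j}$, and one observes that the syzygies of $\phi_5$ are the columns of $\phi_4$ again. The whole tail decomposes into $q$ independent $2\times 2$ blocks, so exactness and minimality are transparent. Your Gr\"obner-basis route would work but is heavier machinery than needed here.

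One correction: the hypersurface element defining $S$ is $g_1=yu-xv$, not $g_3$ (the paper's setup line contains a typo). Any matrix factorization you produce must be of $g_1$; note that $\phi_4$ and $\phi_5$ above involve only $x,y,u,v$, exactly the variables of $g_1$, and each $2\times 2$ block pair is a matrix factorization of $g_1$ up to a sign and a coordinate swap. Searching for a factorization of $g_3=zv-yw$ would not connect to the syzygies of the $U_i,W_i$.
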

\begin{proof}
We take $\phi_1=[g_2^q, g_3^q]$; the matrix $\phi_2$ is then the one whose columns are described in the statement on Lemma \ref{Lemma: syzygies 1}
and the matrix $\phi_3$ is the one whose columns consist of the first $q+1$ coordinates of the vectors $U_i, W_i$ ($1\leq i\leq q$)
described in the statement on Lemma \ref{Lemma: syzygies 2}.

The syzygies on the columns of $\phi_3$ arise from the relations
$x U_i - y W_i\equiv 0 \pmod{g_1}$ and
$u U_i - v W_i\equiv 0 \pmod{g_1}$ for all $1\leq i \leq q$ and
so the columns of $\phi_4$ are given by
$x \mathbf{e}_{2j-1} - y \mathbf{e}_{2j}$ and
$u \mathbf{e}_{2j-1} - v \mathbf{e}_{2j}$ for all $1\leq j\leq q$
where $\mathbf{e}_\ell$ denotes the $\ell^\text{th}$ elementary vector in $S^{2q}$.

The syzygies on the columns of $\phi_4$ arise from the relations
$u\left(x \mathbf{e}_{2j-1} - y \mathbf{e}_{2j}\right) - x \left(u \mathbf{e}_{2j-1} - v \mathbf{e}_{2j}\right)$ and
$v\left(x \mathbf{e}_{2j-1} - y \mathbf{e}_{2j}\right) -y \left(u \mathbf{e}_{2j-1} - v \mathbf{e}_{2j}\right)$
for all $1\leq j\leq q$
so the columns of $\phi_5$ are given by
$u \mathbf{e}_{2j-1} - x \mathbf{e}_{2j}$ and
$v \mathbf{e}_{2j-1} - y \mathbf{e}_{2j}$ for all $1\leq j\leq q$.

The syzygies on the columns of $\phi_5$ is now given by the columns of $\phi_4$ hence at this point we get a period-2 linear resolution.
\end{proof}

\begin{rmk}[Base Change for Tor]
If $A\to B$ is a ring homomorphism, then there is a homology spectral sequence (\cite[Theorem 5.6.6]{Wei94})
\[E^2_{i,j}=\Tor^B_i(\Tor^A_j(M,B),N)\Rightarrow \Tor^A_{i+j}(M,N),\]
for each $A$-module $M$ and each $B$-module $N$.

When $B=A/fA$ where $f$ is a non-zero-divisor in $A$, it is clear that $\Tor^A_i(M,B)=0$ except for $i=0,1$. Also,
$\Tor^A_0(M,B)=M/fM$ and  $\Tor^A_1(M,B)=\Ann_M f$ and
the spectral sequence gives rise to a long exact sequence
\begin{equation}
\label{eqn8}
\cdots\to \Tor^B_{i-1}(\Ann_M f, N)\to \Tor^A_i(M,N) \to \Tor^B_i(M/fM,N)\to \Tor^B_{i-2}(\Ann_M f,N)\to\cdots
\end{equation}
\end{rmk}

\begin{cor}\label{Corollary: determinantal ideal}
All sets of jumping coefficients of ideals in $R/g_1R+g_2R+g_3R$ are discrete.
\end{cor}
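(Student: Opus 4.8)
The plan is to verify the hypothesis of Theorem~\ref{Theorem: discreteness under linear regularity} for the determinantal ideal $I=g_1R+g_2R+g_3R$. That is, I need to show $\reg\left(R/(I^{[q]}+g_iR)\right)=\mathcal{O}(q)$ for $i=1,2,3$, where $q=p^e$. By the symmetry of the $2\times 2$ minors of a $2\times 3$ matrix (the three generators $g_1,g_2,g_3$ play interchangeable roles up to relabelling of the indeterminates $x,y,z,u,v,w$), it suffices to treat a single one of these, and I will take $i=1$. Thus the entire task reduces to bounding $\reg\left(R/(g_1R+g_2^{q}R+g_3^{q}R)\right)$ linearly in $q$.

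To do this I pass to the hypersurface ring $S=R/g_3R$ and study the $S$-module $S/(g_2^{q}S+g_3^{q}S)$, whose explicit minimal free resolution over $S$ is exactly what the preceding Theorem provides: after the initial terms $S^1\leftarrow S^2(-2q)\leftarrow S^{q+1}(-3q)\leftarrow S^{2q}(-3q-1)$, the resolution becomes a $2$-periodic \emph{linear} resolution, with $\phi_4,\phi_5$ given by matrices of linear forms and twists increasing by $1$ at each further step. The key observation is that such a resolution can be read off to give regularity: the Betti numbers live in degrees $0$, $2q$, $3q$, $3q+1$, $3q+2,\dots$, so the contribution $d_{ij}-i$ to the regularity is bounded — the initial terms contribute $\mathcal{O}(q)$, and in the linear tail the twist grows by $1$ exactly as the homological degree does, so $d_{ij}-i$ stabilises. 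Hence $\reg_S\!\left(S/(g_2^{q}S+g_3^{q}S)\right)=\mathcal{O}(q)$, and since $g_1=g_3+(yu-xv)-\dots$ — more precisely since $S/(g_1S+g_2^qS+g_3^qS)=R/(g_1R+g_2^qR+g_3^qR)$ and regularity over $S$ and over $R$ agree for modules annihilated by $g_3$ (the Frobenius-twisted analysis, or simply that $S$-resolutions glue onto the Koszul resolution of $g_3$ over $R$ at the cost of a fixed shift), one concludes $\reg_R\!\left(R/(g_1R+g_2^qR+g_3^qR)\right)=\mathcal{O}(q)$.

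The technical heart, then, is already carried out in the Gr\"obner-basis computations of the three preceding Lemmas and the resolution Theorem: Lemma on the Gr\"obner basis of $g_1R+g_2^qR+g_3^qR$ produces the elements $h_0,\dots,h_{q-1}$ and the shape of the transition matrix $T$; Lemma~\ref{Lemma: syzygies 1} identifies the first syzygy module of $(g_2^q,g_3^q)$ over $S$; Lemma~\ref{Lemma: syzygies 2} handles the second syzygies via the matrix $M$; and the Theorem assembles these into the periodic resolution, using the elementary relations $xU_i-yW_i\equiv 0$, $uU_i-vW_i\equiv 0 \pmod{g_1}$ to see the period-$2$ pattern emerge. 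The step I expect to be the genuine obstacle — and the reason the section is as long as it is — is establishing the Gr\"obner basis claim and the precise syzygy generators in parts (a)--(d) of the first Lemma: the telescoping identities for the $S$-polynomials $\mathcal{S}(h_j,h_i)$ and $\mathcal{S}(h_j,g_3^q)$ must be checked to land exactly in the ideal generated by $g_1$, and one must verify that no further syzygies appear beyond those listed, so that the resolution does not acquire extra non-linear strands that would spoil the linear growth. Once the periodicity and linearity of the tail are in hand, extracting the $\mathcal{O}(q)$ bound on the regularity and invoking Theorem~\ref{Theorem: discreteness under linear regularity} is immediate.
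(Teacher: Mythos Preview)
Your proposal is correct and follows the same route as the paper: reduce by symmetry to $\reg\bigl(R/(g_1R+g_2^{q}R+g_3^{q}R)\bigr)=\mathcal{O}(q)$, read this off from the explicit eventually-linear $2$-periodic $S$-resolution of the preceding Theorem, and invoke Theorem~\ref{Theorem: discreteness under linear regularity}. The one place you are vague is the passage from $S$-regularity to $R$-regularity: the statement that they ``agree for modules annihilated by $g_1$'' is not literally true, and ``gluing onto the Koszul resolution'' is imprecise. The paper handles this cleanly via the base-change long exact sequence for $\Tor$ over $R\to S=R/g_1R$ (the Remark immediately before the Corollary), which sandwiches $\Tor^R_i(M,\mathbb{K})$ between $\Tor^S_{i-1}(M,\mathbb{K})$ and $\Tor^S_i(M,\mathbb{K})$; since $R$ has finite global dimension only finitely many $i$ matter, and the twists in the $S$-resolution are bounded by $3q+i$, giving the $\mathcal{O}(q)$ bound directly.
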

\begin{proof}
Setting $A=R$, $B=S=R/g_1R$, $M=S/(g_2^qS+g_3^qS)=R/g_1R+g_2^qR+g_3^qR$, and $N=\mathbb{K}$ in the long exact sequence (\ref{eqn8}) we obtain
$$ \dots \rightarrow \Tor^S_{i-1}(S/(g_2^qS+g_3^qS), \mathbb{K}) \rightarrow \Tor^R_{i}(R/(g_1R+g_2^qR+g_3^qR), \mathbb{K}) \rightarrow
 \Tor^S_{i}(S/g_2^qS+g_3^qS, \mathbb{K})  \rightarrow \dots $$
where $\mathbb{K}$ denotes here the quotient of $S$ by its irrelevant ideal.
The previous theorem now implies that the degrees of elements in a graded $\mathbb{K}$-bases for $\Tor^R_{i}(R/(g_1R+g_2^qR+g_3^qR), \mathbb{K})$
are bounded by $3q$ hence $\reg  \Tor^R_{i}(R/(g_1R+g_2^qR+g_3^qR), \mathbb{K})=\mathcal{O}(q)$ and by symmetry we also have
$$\reg  \Tor^R_{i}(R/(g_1^qR+g_2R+g_3^qR), \mathbb{K}) =\reg  \Tor^R_{i}(R/(g_1^qR+g_2^qR+g_3R), \mathbb{K}) =\mathcal{O}(q) .$$
The corollary now follows from Theorem \ref{Theorem: discreteness under linear regularity}.
\end{proof}

\section*{Acknowledgments}

We thank Karl Schwede for his useful comments on an early version of this manuscript.

\end{document}